\documentclass[a4paper, 11pt]{amsart}
\usepackage{amsmath,amssymb,amscd}
\usepackage{mathtools}
\usepackage[colorlinks=true, allcolors=blue]{hyperref}
\usepackage{url}
\usepackage[latin1]{inputenc}
\usepackage{graphicx}
\usepackage{tikz-cd}
\usepackage{etoc}
\usepackage{enumerate}
\makeatletter

\newcommand{\pn}{P_{\text{naive}}}

\newtheorem{theorem}{Theorem}[section]

\newtheorem{proposition}[theorem]{Proposition}

\newtheorem{lemma}[theorem]{Lemma}
\theoremstyle{definition}
\newtheorem{definition}[theorem]{Definition}
\newtheorem{example}[theorem]{Example}
\newtheorem{remark}[theorem]{Remark}

\usepackage{color}

\title{Property $P_{\text{naive}}$ for non-orientable big Mapping Class Groups}
\author{Jes\'us HERN\'ANDEZ HERN\'ANDEZ and Tianyi LOU}
\date{\today}
\begin{document}
\maketitle

\begin{abstract}
We give a criterion for property \(P_{\text{naive}}\) for subgroups of orientable big mapping class groups and apply it to mapping class groups of non-orientable infinite type surfaces through the orientation double cover.  
\end{abstract}

\section{Introduction}

Let \(G\) be a group. Recall that \(G\) has property
\(P_{\text{naive}}\) if, for every finite subset
\(F \subset G \setminus \{1\}\), there exists an element \(g \in G\) of
infinite order such that, for every \(h \in F\),
\[
\langle h,g\rangle \cong \langle h\rangle * \langle g\rangle .
\]
This property was introduced by Bekka--Cowling--de la Harpe \cite{BCD95} in connection
with the simplicity of reduced \(C^*\)-algebras.

Later on, Abbott and Dahmani \cite{AD18} proved that every acylindrically hyperbolic group
with no non-trivial finite normal subgroup has property \(P_{\text{naive}}\).  In
particular, this applies to the mapping class groups of sufficiently complex
orientable finite-type surfaces after the usual exceptional cases are
excluded.  The situation is different for infinite-type surfaces. Their
mapping class groups are not acylindrically hyperbolic in general
\cite{BG18}. Nevertheless, in the orientable case, if the surface contains a
non-displaceable finite-type subsurface, the projection-complex machinery of
Bestvina-Bromberg-Fujiwara \cite{BBF15} and the work of
Horbez-Qing-Rafi \cite{HQR20} provide a useful hyperbolic action.  This
makes it possible to prove property \(P_{\text{naive}}\) by a combination of finite-type arguments and ping-pong; see the second author's work \cite{TL24}.

The purpose of this paper is to develop a corresponding argument for
non-orientable big mapping class groups.  The natural tool is the orientation
double cover
\[
        p:\widetilde N\longrightarrow N.
\]
The surface $\widetilde N$ is orientable, so the orientable theory is
available upstairs.  There is, however, an important obstruction to a naive
descent argument: property \(P_{\text{naive}}\) for $\operatorname{Map}(\widetilde N)$ does not by
itself provide a witness which descends to $N$.

The correct formulation is subgroup-theoretic.  The canonical lift embeds
$\operatorname{Map}(N)$ in $\operatorname{Map}(\widetilde N)$ by work of Colin-Hidalgo-Jim\'enez Rolland-Morales-Quispe \cite{CHR26}, and the orientable ping-pong construction
must be run directly inside this lifted subgroup.  A second issue is that it
is not enough to require a prescribed element $h$ to act non-trivially on a
chosen finite-type subsurface: a non-trivial power of $h$ may return to the
subsurface and act trivially there.  The relevant condition is instead
\(
\langle h\rangle\cap \operatorname{Fix}(K)=\{1\}.
\)
This controls every non-trivial return power simultaneously.

We therefore first establish an orientable subgroup criterion in which a
finite-type quotient of the stabilizer is required to be acylindrically
hyperbolic without non-trivial finite normal subgroup.  This finite-type hypothesis is the
precise input needed to apply Abbott-Dahmani \cite{AD18} and to choose one common
pseudo-Anosov witness for all return subgroups.

For the non-orientable application, write
\[
        G:=\iota(\operatorname{Map}(N))<\operatorname{Map}(\widetilde N)
\]
for the lifted subgroup.  If $K\subset N$ is connected and non-orientable,
then $\widetilde K=p^{-1}(K)$ is connected.  The main theorem is stated using
the following explicit finite-type condition.

Let $N$ be a connected non-orientable infinite-type surface and let
$p:\widetilde N\to N$ be its orientation double cover. We say that $N$
satisfies the \emph{symmetric finite-type quotient condition} if there is a
connected non-orientable finite-type non-displaceable subsurface $K_0\subset N$
with the following cofinality property.  For every connected essential
non-orientable finite-type subsurface $K_1\supset K_0$, there exists a
connected essential non-orientable finite-type subsurface $K\supset K_1$
such that
\[
 Q_{G}(\widetilde K)
 :=\rho_{\widetilde K}\bigl(\operatorname{Stab}_{G}(\widetilde K)\bigr)
 \leq \operatorname{Map}(\widehat{\widetilde K})
\]
acts non-elementarily and acylindrically on $\mathcal C(\widehat{\widetilde K})$ and has no
non-trivial finite normal subgroup.  Here
$\widetilde K=p^{-1}(K)$ and $\widehat{\widetilde K}$ is obtained from
$\widetilde K$ by gluing a once-punctured disk to every boundary component, and $\rho_{\widetilde K}: \operatorname{Stab}_{G}(\widetilde K)\rightarrow \operatorname{Map}(\widehat{\widetilde{K}})$ is the homomorphism induced by the restriction and capping homomorphisms (see Section \ref{subsec:Prelim Non-disp}). This symmetric finite-type quotient condition also applies to the orientable case.

\begin{theorem}\label{thm:main}
Let $N$ be a connected non-orientable surface of infinite type that admits a non-displaceable finite-type subsurface. Then $\operatorname{Map}(N)$ has property
%Let $N$ be a connected non-orientable surface of infinite type that satisfies the symmetric finite-type quotient condition. Then $\operatorname{Map}(N)$ has property
\(P_{\text{naive}}\).
\end{theorem}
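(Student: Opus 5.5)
The plan is to work entirely inside the lifted subgroup $G=\iota(\operatorname{Map}(N))<\operatorname{Map}(\widetilde N)$, which is isomorphic to $\operatorname{Map}(N)$ by \cite{CHR26}. It therefore suffices to prove that $G$ has property $\pn$. I will first prove an orientable subgroup criterion, then show that $N$ satisfies the symmetric finite-type quotient condition whenever it admits a non-displaceable finite-type subsurface, and finally combine the two.

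\textbf{Step 1: an orientable subgroup criterion.} Let $H<\operatorname{Map}(\widetilde N)$ and let $\widetilde K_0$ be non-displaceable. Suppose that for every finite $F\subset H\setminus\{1\}$ there is an $H$-invariant-type finite-type $\widetilde K\supset\widetilde K_0$ with two properties. First, $Q_H(\widetilde K)$ is acylindrically hyperbolic on $\mathcal C(\widehat{\widetilde K})$ with no non-trivial finite normal subgroup. Second, $\langle h\rangle\cap\operatorname{Fix}(\widetilde K)=\{1\}$ for each $h\in F$. Under these hypotheses $H$ has $\pn$. The argument runs as follows.

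\begin{enumerate}[(a)]
\item Use the projection complex of \cite{BBF15} built from the $H$-orbit of $\widetilde K$, following \cite{HQR20} and \cite{TL24}.
\item Pick $g\in\operatorname{Stab}_H(\widetilde K)$ whose image in $Q_H(\widetilde K)$ is a pseudo-Anosov. Abbott--Dahmani \cite{AD18} lets one choose this image to be free-product-independent from the finitely many \emph{return subgroups}. These are the images under $\rho_{\widetilde K}$ of the elements $h^k$, and conjugates of $h^k$, that stabilize $\widetilde K$ or some translate.
\item The condition $\langle h\rangle\cap\operatorname{Fix}(\widetilde K)=\{1\}$ guarantees that these images are non-trivial, so the Abbott--Dahmani witness applies to them.
\item Replace $g$ by a large power and run ping-pong on the quasi-tree of metric spaces. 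Powers of $h$ that move $\widetilde K$ are handled by large projection distances. Powers of $h$ that return to $\widetilde K$ are handled by ping-pong in $\mathcal C(\widehat{\widetilde K})$.
\end{enumerate}

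\textbf{Step 2: verifying the hypotheses for $G$.} Let $K_0\subset N$ be non-displaceable; after enlarging, assume it is connected and non-orientable. Then $\widetilde K_0=p^{-1}(K_0)$ is connected, and it is non-displaceable in $\widetilde N$: a lift of a displacing map would displace $K_0$, and conversely every element of $G$ commutes with the deck involution $\tau$. Given a finite $F$, each $h\in F$ is non-trivial, so $h$ moves some curve or end. I therefore choose a finite-type non-orientable $K\supset K_0$ large enough that no non-trivial power of $h$ stabilizes $K$ while restricting to the identity. Such a choice is possible because $\operatorname{Fix}$ shrinks as $K$ exhausts $N$, and torsion and infinite-order cases can be handled separately using the finite order of stabilizers of finite-type restrictions.

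The main obstacle is the quotient condition: one must show that
\[
Q_G(\widetilde K)\cong\operatorname{Stab}_{\operatorname{Map}(N)}(K)\text{ restricted to }\widehat K,
\]
viewed inside $\operatorname{Map}(\widehat{\widetilde K})$ as a $\tau$-centralizing subgroup, is acylindrically hyperbolic with trivial finite radical. My first attempt would be as follows.

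\begin{enumerate}[(a)]
\item Identify this quotient with the lift of a finite-index subgroup of $\operatorname{Map}(\widehat K)$, namely the one preserving the partition of punctures induced by $N\setminus K$.
\item Use the known non-elementary acylindrical action of non-orientable finite-type mapping class groups on $\mathcal C(\widehat{\widetilde K})$ via symmetric curves. This action is quasi-isometric to $\mathcal C(\widehat K)$.
\item Kill finite normal subgroups with the standard argument: a finite normal subgroup fixes the limit set of pseudo-Anosovs pointwise, hence acts trivially. The exception is the deck involution, which is not in $G$, since $\iota$ sends $\operatorname{Map}(N)$ to orientation-preserving lifts.
\end{enumerate}

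Low-complexity $\widehat K$ (the exceptional cases) is avoided by enlarging $K$, which the cofinality allows.
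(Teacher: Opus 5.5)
Your architecture is the paper's: run the orientable subgroup criterion (relative projection complex plus an Abbott--Dahmani common witness in $Q_G(\widetilde K)$) directly inside $G=\iota(\operatorname{Map}(N))$, then verify the finite-type quotient condition. The genuine gap is in your power-detection step.

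You need a \emph{single} finite-type $K$ with $\langle h\rangle\cap\operatorname{Fix}(K)=\{1\}$, that is, one $K$ that defeats all non-zero powers of $h$ at once. That $\operatorname{Fix}(K)$ shrinks to $\{1\}$ along an exhaustion $K_1\subset K_2\subset\cdots$ only shows that each \emph{fixed} $h^m\neq1$ eventually leaves $\operatorname{Fix}(K_j)$. Write $\langle h\rangle\cap\operatorname{Fix}(K_j)=\langle h^{d_j}\rangle$. Nothing you say excludes $d_j\neq0$ for every $j$ with $d_j\to\infty$: at each stage some return power of $h$ could stabilize $K_j$ and restrict to a periodic class of $\operatorname{Map}(\widehat K_j)$. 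Bounds on the orders of periodic elements of $\operatorname{Map}(\widehat K_j)$ do not help, since they depend on $K_j$ and you must keep enlarging. This is exactly the return-power issue the paper singles out in its introduction.

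The missing idea is that an infinite-order element has one essential curve with infinite $\langle h\rangle$-orbit. The paper takes this from \cite[Lemma 12]{ACCL21}, applied to $\iota(h)$ on the orientable surface $S$ (Lemma~\ref{power-detect}). Any $K$ containing that curve satisfies $h^m\notin\operatorname{Fix}(K)$ for all $m\neq0$, and one then symmetrizes under $\tau$ and projects to $N$ as in Lemma~\ref{lem:tau-powerdetect}. Your exhaustion argument by itself only handles torsion elements.

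A smaller gap is the lifting of non-displaceability. The projection complex needs distinct $G$-translates of $[\widetilde K_0]$ to overlap. That is non-displaceability relative to $G$ for \emph{every} representative, not only for the $\tau$-equivariant canonical lift. A homeomorphism isotopic to $\widetilde f$ need not commute with $\tau$, so you cannot simply push a displacement down to $N$. The paper closes this with a filling pair of two-sided curves and $i_S(p^{-1}(a),p^{-1}(b))=2\,i_N(a,b)$ (Lemma~\ref{lem:nondisp-lifts}). Your claim that $\widetilde K_0$ is non-displaceable in all of $\operatorname{Map}(\widetilde N)$ is more than you need and is not what your argument shows.

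Your verification of the quotient condition takes a different but workable route. You identify $Q_G(\widetilde K)$ with the lift of a finite-index subgroup of $\operatorname{Map}(\widehat K)$. It is finite index because it contains the capped image of the classes fixing $\partial K$; the ``partition of punctures'' description is not the exact image. Non-elementarity then follows from lifts of independent pseudo-Anosovs, whereas the paper uses Osin's classification together with the elliptic element $T_\alpha T_{\tau(\alpha)}^{-1}$. For finite normal subgroups, your limit-set argument still needs faithfulness of $Q_G(\widetilde K)$ on its limit set. You can get this by descending to $\widehat K$ and noting that a finite normal subgroup commutes with powers of two-sided twists, which is essentially what the paper extracts from Stukow's centralizer results.
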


The proof has two independent parts.  In Section~\ref{sec:subgroup} we prove the
orientable subgroup criterion.  In Section~\ref{sec:nonorientable} we prove that
all powers of a finite set can be detected in a $\tau$-invariant finite-type
subsurface of the orientation double cover, then we also prove that the surface satisfies the symmetric finite-type quotient condition, and finally we apply the subgroup
criterion to $G$.  Since the resulting witness already lies in $G$, no
additional symmetrization is required.

\subsection*{Acknowledgements}
The first author would like to thank Rita Jim\'enez Rolland and N\'estor Colin for enlightening conversations. Also, the first author was partially funded during the creation of this work by the UNAM PAPIIT research grants IN103026 and IN106925. The second author would like to thank Indira Chatterji and Fran\c{c}ois Dahmani for their guidance and for many illuminating discussions. 

% ------------------------------------------------------------
\section{Background and notation}
% ------------------------------------------------------------

\subsection{Non-orientable surfaces}

Let \(N\) be a connected, second-countable, Hausdorff surface. In this
paper, \(N\) is non-orientable. Unless otherwise stated, $N$ has empty boundary.

For a non-orientable finite type surface, we write $N_{g,b,p}$ for the connected non-orientable surface of genus \(g\), with \(b\) boundary
components and \(p\) punctures. Recall that for a non-orientable finite-type surface $N$, its genus is the number of projective planes whose connected sum forms $N$ along with the punctures and boundary components. For an infinite-type surface it is the supremum of the genera of its finite-type subsurfaces.
%a genus \(g\) non-orientable
%surface is the connected sum of \(g\) projective planes. 

If \(N\) is non-orientable, we define
\[
\operatorname{Map}(N)=\operatorname{Homeo}(N)/\operatorname{Homeo}_{0}(N),
\]
where \(\operatorname{Homeo}_{0}(N)\) denotes the connected component of the identity in
\(\operatorname{Homeo}(N)\), equipped with the compact-open topology. If the surface were orientable, one usually takes orientation-preserving
homeomorphisms. For the present paper, the non-orientable case is the
main object.

\subsection{Curves on non-orientable surfaces}

Let \(N\) be a compact non-orientable surface. A simple closed curve
\(\alpha\subset N\) is called:
\begin{itemize}
    \item one-sided if a regular neighborhood of \(\alpha\) is a
    M\"{o}bius band;
    \item two-sided if a regular neighborhood of \(\alpha\) is an
    annulus.
\end{itemize}

A curve is called \emph{essential} if it does not bound a disk, does not bound a once-punctured disk, does not bound a M\"obius band, and is not isotopic to a boundary component.

In the non-orientable setting, one must be careful: Dehn twists only exist
around two-sided curves. There is no Dehn twist around a one-sided curve.
This is one of the main differences from the orientable case.

The curve graph \(\mathcal{C}(N)\) has vertices given by isotopy classes of essential
simple closed curves, and two vertices are joined by an edge if they admit
disjoint representatives.

Kuno \cite{Kuno16} proves that if \(\mathcal{C}(N)\) is connected, then it is uniformly
hyperbolic in the
non-orientable setting. In fact, it is \(17\)-hyperbolic.

\subsection{Non-displaceable subsurfaces}\label{subsec:Prelim Non-disp}

For this subsection let $\Sigma$ be a surface, possibly orientable or non-orientable.

\begin{definition}
%Let \(\Sigma\) be a surface. 
A connected finite-type subsurface \(K\subset \Sigma\) is
called \emph{non-displaceable} if
$f(K)\cap K\neq \varnothing$
for every \(f\in \operatorname{Homeo}(S)\).
\end{definition}

Note that this definition does not depend on the orientability of the surface.
%Hence, it makes sense for non-orientable surfaces as well.

\begin{remark}
If \(K'\subset \Sigma\) is non-displaceable and \(K\subset \Sigma\) is a connected
subsurface containing \(K'\), then \(K\) is also non-displaceable.
\end{remark}

We shall need the following relative version of non-displaceability.

\begin{definition}
Let \(\Sigma\) be a surface, let \(H\leq \operatorname{Map}(\Sigma)\), and let
\(K\subset \Sigma\) be a connected finite type subsurface. We say that \(K\)
is \emph{non-displaceable relative to \(H\)} if
\(
\widetilde{h}(K)\cap K\neq\varnothing
\)
for every \(\widetilde{h}\in \pi^{-1}(H)\), where $\pi: \operatorname{Homeo}(\Sigma) \to \operatorname{Map}(\Sigma)$ is the natural projection.
\end{definition}

Henceforth, we might abuse notation and language and say $h(K) \cap K$ for $h \in \operatorname{Map}(\Sigma)$ when we mean $\widetilde{h}(K)\cap K$ for every \(\widetilde{h}\in \pi^{-1}(H)\).

Let $K\subset \Sigma$ be a connected essential finite-type subsurface. Let $\operatorname{Stab}_{\operatorname{Map}(\Sigma)}(K)$ be the subgroup of $\operatorname{Map}(\Sigma)$ made of all mapping classes that preserve the isotopy class of $K$. Define
\(
\operatorname{Stab}_H(K):=H\cap\operatorname{Stab}_{\operatorname{Map}(\Sigma)}(K).
\)
Let $\operatorname{Fix}_{\operatorname{Map}(\Sigma)}(K)$ be the subgroup of $\operatorname{Map}(\Sigma)$ made of all elements that have a representative $\psi \in \operatorname{Homeo}(\Sigma)$ such that $\psi(K)=K$ and $\psi|_{K}=\mathrm{Id}_{K}$. Define
\(
\operatorname{Fix}_H(K):=
 H\cap\operatorname{Fix}_{\operatorname{Map}(\Sigma)}(K).
\)

Let $\widehat K$ be obtained from $K$ by gluing a once-punctured disk to
every boundary component.  As in \cite[Lemma 2.4]{HQR20}, restriction to
$K$ followed by capping gives a homomorphism
\[
\rho_K:\operatorname{Stab}_{\operatorname{Map}(\Sigma)}(K)\longrightarrow \operatorname{Map}(\widehat K)
\]
with
\(\ker\rho_K=\operatorname{Fix}_{\operatorname{Map}(\Sigma)}(K).
\)
We put
\[
Q_H(K):=\rho_K(\operatorname{Stab}_H(K))\leq\operatorname{Map}(\widehat K).
\]
Thus
\(\ker\bigl(\rho_K|_{\operatorname{Stab}_H(K)}\bigr)=\operatorname{Fix}_H(K).
\)

% ------------------------------------------------------------
\section{A subgroup criterion for \texorpdfstring{$P_{\text{naive}}$}{P\_naive}}
\label{sec:subgroup}

The following examples illustrate that property \(P_{\text{naive}}\) is not
inherited by arbitrary subgroups. Thus, in order to prove that a subgroup
has \(P_{\text{naive}}\), one has to check that the ping-pong argument can be
performed inside the subgroup itself.

\begin{example}
Let $G=\mathbb{Z}^{2}*\mathbb{Z}$. Then \(G\) is acylindrically hyperbolic and has no non-trivial finite normal
subgroups. Hence, by \cite{AD18}, \(G\) has property \(P_{\text{naive}}\). However, the subgroup $\mathbb{Z}^{2}<G$ does not have property \(P_{\text{naive}}\) because \(\mathbb{Z}^{2}\) is abelian.
\end{example}

There are many similar examples. For instance, the free group \(F_{2}\) has property \(P_{\text{naive}}\). However, it contains an infinite cyclic subgroup $\langle a\rangle \cong \mathbb{Z}$. This subgroup \(\langle a\rangle\) does not have Property \(P_{\text{naive}}\), since it is abelian.

These examples show that property \(P_{\text{naive}}\) is not,
in general, inherited by arbitrary subgroups. Therefore, if $S$ is an orientable
surface, in order to prove
that a subgroup \(H<\operatorname{Map}(S)\) has \(P_{\text{naive}}\) one should
not argue by inheritance from \(\operatorname{Map}(S)\). Instead, one has to
check that the ping-pong construction can be performed directly inside
\(H\). 

The following lemma provides a finite-type subsurface on which the
given elements have no non-trivial powers acting trivially. 

\begin{lemma}\label{power-detect}
Let $S$ be a connected orientable surface of infinite type and let
$H<\operatorname{Map}(S)$. Suppose that $K_0\subset S$ is a connected finite-type
subsurface which is non-displaceable relative to $H$. For every finite subset
\(
F=\{h_1,\ldots,h_n\}\subset H\setminus\{1\},
\)
there exists a connected essential finite-type subsurface
$K\supset K_0$, still non-displaceable relative to $H$, such that
\[
\langle h_i\rangle\cap\operatorname{Fix}_H(K)=\{1\}
\qquad (1\leq i\leq n).
\]
\end{lemma}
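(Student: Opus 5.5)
The plan is to treat each $h_i$ separately and then take a single connected finite-type subsurface containing everything found. The basic tool is the following observation. If $\phi\in\operatorname{Fix}_H(K)$, then $\phi$ has a representative that is the identity on $K$, so $\phi(\gamma)=\gamma$ for every essential curve $\gamma\subset K$. Hence, to keep a given non-trivial element out of $\operatorname{Fix}_H(K)$, it suffices to put into $K$ one curve that it moves. Such a curve always exists for a non-trivial mapping class, by the Alexander method for infinite-type surfaces (Hern\'andez--Morales--Valdez).

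Recall also that $\{k\in\mathbb Z : h_i^k\in\operatorname{Fix}_H(K)\}$ is a subgroup $d_i(K)\mathbb Z$, and that $d_i(K)$ can only grow under divisibility when $K$ is enlarged. So the goal is to force $d_i(K)=0$ for every $i$.

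\emph{Finite-order case.} Suppose $h_i$ has finite order $m_i$. For each $1\le k<m_i$ choose an essential curve $\gamma_{i,k}$ with $h_i^k(\gamma_{i,k})\neq\gamma_{i,k}$. Any $K$ containing all the $\gamma_{i,k}$ excludes every non-trivial power of $h_i$ from $\operatorname{Fix}_H(K)$.

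\emph{Infinite-order case with an infinite orbit.} Suppose $h_i$ has infinite order and some curve $\alpha_i$ has infinite $\langle h_i\rangle$-orbit. Then $h_i^k(\alpha_i)\neq\alpha_i$ for all $k\neq0$, so putting $\alpha_i$ into $K$ suffices.

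\emph{Assembling $K$.} Take a regular neighbourhood of $K_0$ together with the finitely many curves chosen above. Add arcs or curves joining the components until the result is connected, and then complete it to an essential subsurface by adding any complementary discs and once-punctured discs. This produces a connected essential finite-type $K\supset K_0$. It remains non-displaceable relative to $H$ by the monotonicity observed in the Remark of Section~\ref{subsec:Prelim Non-disp}: $K\supset K_0$ and every lift $\widetilde h$ meets $K_0$, so it meets $K$.

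\emph{Main obstacle.} The hard step is an infinite-order $h_i$ for which every curve has a finite $\langle h_i\rangle$-orbit. An example is an infinite product of periodic maps of unbounded orders supported on disjoint subsurfaces $R_j$ that exit every compact set. For a candidate $K$, the first thing to try is to exploit the fact that $K$ meets only finitely many "pieces" of the dynamics. However, in this example, if $K$ is built from finitely many $R_j$ together with connecting regions where $h_i$ is the identity, then $h_i^{\operatorname{lcm}(n_j)}$ is non-trivial but lies in $\operatorname{Fix}_H(K)$. So I expect this case to be a genuine gap rather than a routine step.

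To close it, I would first try to show that the hypotheses force an infinite orbit. I would use the non-displaceability of $K_0$ relative to $H$ together with a Nielsen--Thurston type classification of $h_i$ acting on the subsurface projection to a non-displaceable $K$, in the spirit of Horbez--Qing--Rafi. If that fails, the lemma would need an additional hypothesis on $F$ (for instance, that each infinite-order $h_i$ has a curve with infinite orbit), and I would record that as an explicit assumption.
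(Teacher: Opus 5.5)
\textbf{Genuine gap.} Your outline matches the paper's proof: for finite-order $h_i$ you pick a curve moved by each non-trivial power, for infinite-order $h_i$ a curve with infinite $\langle h_i\rangle$-orbit, and then you take a connected essential hull of $K_0$ together with these curves. The gap is exactly the case you flag as the main obstacle. You neither rule out an infinite-order $h_i$ all of whose curve orbits are finite, nor give an argument that handles it. Your fallback of adding a hypothesis on $F$ would change the statement.

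The missing ingredient is the fact the paper quotes as \cite[Lemma 12]{ACCL21}: \emph{every infinite-order element of $\operatorname{Map}(S)$ has an essential curve with infinite orbit}. So this case is empty. Neither the non-displaceability of $K_0$ nor any Horbez--Qing--Rafi type classification is needed.

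\textbf{Why your example fails.} Suppose $\phi_j$ is the identity off $R_j$, and the curves of $\partial R_j$ are essential and pairwise non-isotopic in $S$. This is forced for disjoint pieces exiting every compact set of a connected surface. Then $\phi_j$ comes from $\operatorname{Map}(R_j,\partial R_j)$, which is torsion-free and injects into $\operatorname{Map}(S)$, so a non-trivial $\phi_j$ has infinite order. Concretely, an order-$n_j$ rotation of $R_j$ damped to the identity in a collar of $\partial R_j$ has $n_j$-th power equal to a non-trivial product of twists about components of $\partial R_j$. Any curve crossing $\partial R_j$ therefore already has infinite orbit, so your example falls under your own infinite-orbit case.

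\textbf{Sketch of the missing fact.} Suppose every curve had finite $\langle h\rangle$-orbit.
\begin{enumerate}
\item Filling the finite orbits of larger and larger finite sets of curves gives an $h$-invariant exhaustion $L_1\subset L_2\subset\cdots$ by finite-type subsurfaces.
\item By the Alexander method, each $\rho_{L_j}(h)\in\operatorname{Map}(\widehat L_j)$ has finite order $N_j$, and $N_j\mid N_{j+1}$ once the $L_j$ are large enough for the action on curves to be faithful.
\item Conversely, $\rho_{L_{j+1}}(h)^{N_j}$ is periodic and acts trivially on the non-annular subsurface $L_j$. Realize it by an isometry (Kerckhoff's Nielsen realization). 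That isometry preserves the geodesic realization of $L_j$ and is homotopic to the identity there, so it is the identity on an open set, hence everywhere. Thus $N_{j+1}\mid N_j$.
\item Hence the local periods stabilize at some $N$. Then $h^N$ fixes every curve of $S$, and $h^N=1$ by the Alexander method for infinite-type surfaces.
\end{enumerate}
Your example overlooks exactly this rigidity: periods cannot jump across an essential curve without creating twisting along it.
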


\begin{proof}
Fix $i$. Suppose first that $h_i$ has infinite order. By
\cite[Lemma 12]{ACCL21}, there exists an essential curve $c_i$ whose
$\langle h_i\rangle$-orbit is infinite. Hence
\(
h_i^m(c_i)\neq c_i\) for every \(m\neq 0.
\)
If $h_i$ has finite order $r_i$, then, for every
$1\leq m<r_i$, faithfulness of the action of $\operatorname{Map}(S)$ on the
curve graph gives an essential curve $c_{i,m}$ such that
\(
h_i^m(c_{i,m})\neq c_{i,m}.
\)

Choose a connected finite-type subsurface containing $K_0$ and all
the finitely many curves chosen above, and enlarge it, if necessary,
so that it is essential.
Since it contains $K_0$, it remains non-displaceable relative to $H$.

If $h_i^m\in\operatorname{Fix}_H(K)$, then $h_i^m$ fixes every chosen curve
contained in $K$. The preceding construction therefore implies
$h_i^m=1$. Hence
$\langle h_i\rangle\cap\operatorname{Fix}_H(K)=\{1\}.$
\end{proof}

With this lemma in mind, henceforth we always assume that if $\{h_{1}, \ldots, h_{n}\} \subset H \setminus \{1\}$ and $K$ is a non-displaceable subsurface of finite type relative to $H$, then $\langle h_i \rangle \cap \operatorname{Fix}_H(K) = \{1\}$.

\subsection{The relative projection-complex action}
Let $S$ be an orientable surface of infinite type, and $K \subset S$ be a connected, essential, finite-type subsurface non-displaceable relative to
$H$. Put
\[
       \mathbf{Y}_{H,K}:=H\cdot[K].
\]
For $Y\in\mathbf{Y}_{H,K}$ let $\mathcal{C}_S(Y)$ denote the subgraph of $\mathcal{C}(S)$ induced by the curves that can be represented essentially in $Y$, and recall that $\mathcal{C}_S(Y) \cong \mathcal{C}(Y)$ (see for example \cite{HQR20}).
%curve graph piece consisting
%of curves represented essentially in $Y$. 
Since $K$ is non-displaceable relative to $H$, any
two distinct members of $\mathbf{Y}_{H,K}$ overlap essentially, i.e. for any $[K_1], [K_{2}] \in \mathbf{Y}_{H,K}$ there are no isotopic representatives of $K_1$ and $K_2$ that are disjoint.  Indeed, relative
non-displaceability gives non-empty intersection; if their boundaries did not
meet essentially, two homeomorphic essential representatives of the same
finite-type subsurface would be nested, and the absence of disk,
once-punctured-disk and annular complementary redundancies would force them
to be isotopic.  Hence subsurface projections between distinct elements of
$\mathbf{Y}_{H,K}$ are defined exactly as in \cite{BBF15,HQR20}.

Note however that for subsurface projections to be defined we need that $|\mathbf{Y}_{H,K}| > 1$. With this assumption, the proofs of the projection axioms in \cite{BBF15,HQR20} use only this
pairwise-overlap projection data, and therefore apply verbatim to the
$H$-orbit $\mathbf{Y}_{H,K}$.  Therefore the projection construction gives,
for a sufficiently large projection parameter, a hyperbolic total space
\[
       \mathbb{X}_H(K):=\mathcal{C}(\mathbf{Y}_{H,K})
\]
with an isometric action of $H$ where each piece $\mathcal{C}_S(Y)$ is isometrically
embedded.

\begin{remark}\label{rmk:YHK1}
    If $|\mathbf{Y}_{H,K}| = 1$, we have that $H \leq \operatorname{Stab}_{\operatorname{Map}(S)}(K)$. This implies that $\mathbb{X}_H(K) \cong \mathcal{C}(K)$ and it admits an isometric $H$-action given by $\rho_K$, where $\operatorname{Fix}_H(K)$ acts trivially.
\end{remark}

Recall that an element \(f \in \operatorname{Map}(S)\) is \emph{\(K\)-pseudo-Anosov} if \(f\) preserves the isotopy class of the subsurface $K \subset S$ and, denoting as before by \(\widehat{K}\) a surface obtained from \(K\) by gluing a once-punctured disk on every boundary component of \(K\), the mapping class \(f\) induces a pseudo-Anosov mapping class of \(\widehat{K}\).

Also recall from Handel-Mosher \cite{HM21} that if $X$ is a hyperbolic metric space, $G$ is a group acting by isometries on $X$ and $g \in G$ acts as a loxodromic isometry on $X$ with fixed points $g^+,g^- \in \partial X$, we say that $g$ is WWPD if for every $x \in X$ and $R > 0$, there exists an integer $M \geq 1$ such that any subset $S \subset G$ that satisfies the following properties is finite:
\begin{enumerate}
    \item For each $h \in S$ we have $d(x, h(x)) < R$ and $d(g^{M}(x), g^{M}(h(x)) < R$.
    \item For each $s_{1}, s_{2} \in S$ with $s_{1} \neq s_{2}$, $s_{1}$ and $s_{2}$ lie in different left cosets of $\operatorname{Stab_{G}(g^+,g^-)}$.
\end{enumerate}

Moreover, from Bestvina-Fujiwara \cite{BF02} we say the $g$ is WPD if for every $x \in X$ and $R > 0$, there exists an integer $M \geq 1$ such that the set from (1) above taking $S = G$ is finite.

\begin{proposition}\label{prop:relativeHQR}
Let $K \subset S$ be connected, essential, finite-type and non-displaceable subsurface relative to
$H$. Then $H$ acts isometrically on the hyperbolic space
$\mathbb{X}_H(K)$.  If $g\in H$ is $K$-pseudo-Anosov, equivalently if
$g\in\operatorname{Stab}_H(K)$ and $\rho_K(g)$ is pseudo-Anosov on $\widehat K$, then
$g$ is a WWPD loxodromic element for the $H$-action on $\mathbb{X}_H(K)$.
\end{proposition}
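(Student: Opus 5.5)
The plan is to follow the strategy of Horbez--Qing--Rafi \cite{HQR20}, run on the $H$-orbit $\mathbf{Y}_{H,K}$ instead of the full $\operatorname{Map}(S)$-orbit. I would split into two cases according to $|\mathbf{Y}_{H,K}|$.

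\emph{The case $|\mathbf{Y}_{H,K}|=1$.} By Remark~\ref{rmk:YHK1}, $\mathbb{X}_H(K)\cong\mathcal{C}(K)\cong\mathcal{C}(\widehat K)$, and $H$ acts through $\rho_K$. The space $\mathcal{C}(\widehat K)$ is hyperbolic, and $\operatorname{Map}(\widehat K)$ acts acylindrically on it by Bowditch. Since $\rho_K(g)$ is pseudo-Anosov, it is loxodromic and WPD for the $\rho_K(H)$-action. Then I would pull WWPD back through $\rho_K$. If $S'\subset H$ satisfies condition (1) with pairwise distinct cosets of $E=\operatorname{Stab}_H(g^\pm)$, I need to show $S'$ is finite. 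Since $\ker\rho_K=\operatorname{Fix}_H(K)$ acts trivially, it fixes $g^\pm$, so $\ker\rho_K\subset E$. Hence distinct $E$-cosets have distinct images under $\rho_K$. Those images satisfy (1) for the $\rho_K(H)$-action, and WPD makes the set of images finite. Therefore $S'$ is finite.

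\emph{The case $|\mathbf{Y}_{H,K}|>1$.} First I would verify the projection axioms of \cite{BBF15} for the family $\{\mathcal{C}_S(Y)\}_{Y\in\mathbf{Y}_{H,K}}$ with the subsurface projections $\pi_Y$. Pairwise overlap of distinct members has already been established in the text, so the Behrstock inequality and the finiteness axiom hold as in \cite{HQR20}. The arguments there are pairwise (Behrstock) or use only curves in a fixed surface together with Masur--Minsky bounded geodesic image (finiteness), so they apply to any sub-collection of pairwise-overlapping subsurfaces. The $H$-invariance of the collection gives an $H$-action on the projection complex. Taking the projection parameter large, I would then use the quasi-tree of metric spaces $\mathcal{C}(\mathbf{Y}_{H,K})$. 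It is hyperbolic because each piece is uniformly hyperbolic (curve graphs are uniformly hyperbolic) and \cite{BBF15} applies. The pieces are isometrically embedded, and $H$ acts isometrically.

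Next, take $g\in\operatorname{Stab}_H(K)$ with $\rho_K(g)$ pseudo-Anosov. It preserves the piece $\mathcal{C}_S(K)$ and acts there loxodromically. Since that piece is isometrically embedded, and in fact quasi-convex via the nearest-point projection retraction of \cite{BBF15}, $g$ is loxodromic on $\mathbb{X}_H(K)$ with $g^\pm\in\partial\mathcal{C}_S(K)$. For WWPD I would follow \cite[Prop.~4.?]{HQR20} and \cite{BBF15}: an element $h$ that moves a basepoint $x\in\mathcal{C}_S(K)$ and $g^M x$ by less than $R$ must nearly preserve a long subsegment of the axis inside the piece of $K$. If $h[K]\neq[K]$, then projecting to $h\mathcal{C}_S(K)$ gives bounded diameter, by the projection axioms, which contradicts fellow-travelling for $M$ large. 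So $h\in\operatorname{Stab}_H(K)$. Then the acylindricity argument from the first case, applied to $\rho_K$ on $\operatorname{Stab}_H(K)$, shows that only finitely many cosets of $\operatorname{Stab}(g^\pm)$ occur; again $\operatorname{Fix}_H(K)$ fixes $g^\pm$.

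I expect the main obstacle to be this last step: showing that elements which coarsely fix a long axis segment must stabilize $K$. This needs a uniform bound $C$ on projections of $h\mathcal{C}_S(K)$ to $\mathcal{C}_S(K)$, which the projection axioms give, and a choice of $M$ with translation length times $M$ much larger than $C+R$.
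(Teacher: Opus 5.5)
Your proposal is correct and follows the paper's proof. It uses the same split according to $|\mathbf{Y}_{H,K}|$, and in the first case the same pull-back of the Bestvina--Fujiwara WPD property of $\rho_K(g)$ through $\rho_K$; your observation that $\ker\rho_K=\operatorname{Fix}_H(K)$ fixes $g^{\pm}$ is exactly what justifies the paper's coset-counting step. In the second case the paper only asserts that the projection axioms and the WWPD argument of \cite[Theorem 2.9]{HQR20} pass to the $H$-orbit, whereas you spell that argument out. Bounded projections between distinct pieces force any $h$ moving $x$ and $g^Mx$ by less than $R$ into $\operatorname{Stab}_H(K)$ for $M$ large, which reduces to the first case; this is a more explicit version of the same route.
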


\begin{proof}
If $|\mathbf{Y}_{H,K}| = 1$, as mentioned in Remark \ref{rmk:YHK1}, $H$ acts by isometries in $\mathbb{X}_H(K) \cong \mathcal{C}(K)$ via $\widehat{H}:=\rho_K(H)$. Suppose that $\rho_K(g)$ is a pseudo-Anosov on $\widehat K$, then by the work of Masur-Minsky \cite{MM99} and Bestvina-Fujiwara \cite{BF02} we have that both $g$ and $\rho_K(g)$ acts as loxodromic isometries on $\mathcal{C}(K)$ and $\rho_K(g)$ acts WPD on $\mathcal{C}(K)$. Now, due to the definition of the action we have the following: Let $x \in \mathcal{C}(K)$ and $R > 0$.
\begin{itemize}
    \item If $T \subset H$ satisfies (1) from the definition of WWPD loxodromic isometries for some $M \geq 1$, then $\rho_K(T) \subset \widehat{H}$ also satisfies it.
    \item If $T \subset H$ has infinitely many representatives of left cosets of $\operatorname{Stab}_{H}(g^+,g^-)$, then $\rho_K(T)$ also has infinitely many representatives.
\end{itemize}
If $T \subset H$ is a set that satisfies (1) and (2) from the definition of WWPD loxodromic isometries, then by the argument above, the fact that $\rho_K(g)$ is WPD implies that $\rho_K(T)$ is finite, and thus $\rho_K(T)$ has finitely many representatives of left cosets of $\operatorname{Stab}_{H}(g^+,g^-)$. Hence $T$ has finitely many representatives of left cosets of $\operatorname{Stab}_{H}(g^+,g^-)$. Now, (2) implies that $T$ has to be finite. Therefore, $g$ is WWPD.

Suppose now that $|\mathbf{Y}_{H,K}| > 1$. The projection axioms and the
construction of the total space depend only
on the projection data, and hence remain valid after restricting the full
orbit to the $H$-orbit.  A $K$-pseudo-Anosov acts loxodromically on the
isometrically embedded piece $\mathcal{C}_S(K)\cong\mathcal{C}(\widehat K)$.  The WWPD
argument of \cite[Theorem 2.9]{HQR20} uses the same projection bounds and
therefore applies to the restricted orbit.  Equivalently, the bounded
projection characterization of WWPD is inherited by the restricted
projection system.
\end{proof}

Bowditch proved that the action of
\(\operatorname{Map}(\widehat K)\) on
\(\mathcal C(\widehat K)\cong\mathcal C_S(K)\) is acylindrical
\cite{B08}. By enlarging $K$ if necessary, we may assume that $\widehat K$ is sufficiently complex so that $\operatorname{Map}(\widehat K)$ is acylindrically hyperbolic and has no non-trivial finite normal subgroup. This can be arranged since $S$ is of infinite type. Therefore, $\operatorname{Map}(\widehat K)$ has property $P_{\mathrm{naive}}$ by \cite{AD18}.

For every finite subset
\(
F=\{h_1,\ldots,h_n\}\subset H\setminus\{1\}
\), the following lemma shows that the subgroup $H$ has property $P_{\text{naive}}$ when all $h_{i}$ preserving $[K]$.

\begin{lemma}\label{lem:common-witness}
Let $S$ be an infinite-type surface, $H \leq \operatorname{Map}(S)$ and
$F = {h_{1}, \ldots , h_{n}} \subset H \setminus \{1\}$. Suppose that
$K\subset S$ is a non-displaceable finite-type subsurface relative to $H$ and satisfy the
symmetric finite-type quotient condition. Then there exists a $K$-pseudo-Anosov
element $g\in H$ such that, for every $i$ for which $h_i$ preserves the
isotopy class of $K$, if
\(
\langle h_i\rangle\cap\operatorname{Fix}_H(K)=\{1\},
\)
then
\(
\langle h_i,g\rangle
\cong
\langle h_i\rangle*\langle g\rangle.
\)
\end{lemma}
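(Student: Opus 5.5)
The idea is to push everything into the finite-type quotient $Q_H(K)=\rho_K(\operatorname{Stab}_H(K))\leq\operatorname{Map}(\widehat K)$, find a common witness there with Abbott--Dahmani \cite{AD18}, and lift it back to $H$. Injectivity on each relevant cyclic subgroup comes from the hypothesis $\langle h_i\rangle\cap\operatorname{Fix}_H(K)=\{1\}$, since $\ker(\rho_K|_{\operatorname{Stab}_H(K)})=\operatorname{Fix}_H(K)$.

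\textbf{Reduction to the quotient.} Let $I$ be the set of indices $i$ with $h_i\in\operatorname{Stab}_H(K)$ and $\langle h_i\rangle\cap\operatorname{Fix}_H(K)=\{1\}$. For $i\in I$, the restriction of $\rho_K$ to $\langle h_i\rangle$ is injective, so $\bar h_i:=\rho_K(h_i)\neq 1$ and $\bar h_i$ has the same order as $h_i$. By the symmetric finite-type quotient condition, $Q_H(K)$ acts non-elementarily and acylindrically on $\mathcal C(\widehat K)$ and has no non-trivial finite normal subgroup. (If this is only guaranteed for some enlargement of $K$, I would first replace $K$ by that enlargement. This is harmless: it preserves relative non-displaceability, and by Lemma~\ref{power-detect} it also preserves the trivial-intersection hypothesis.)

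Hence $Q_H(K)$ is acylindrically hyperbolic without finite normal subgroups, and \cite{AD18} applies. I want more than $P_{\text{naive}}$ itself: the witness should be loxodromic on $\mathcal C(\widehat K)$. The Abbott--Dahmani proof produces the witness as a loxodromic, WPD element for the acylindrical action at hand. So applying it to the action on $\mathcal C(\widehat K)$ with the finite set $\{\bar h_i: i\in I\}$ yields $\bar g\in Q_H(K)$ that is loxodromic on $\mathcal C(\widehat K)$ and satisfies $\langle\bar h_i,\bar g\rangle\cong\langle\bar h_i\rangle*\langle\bar g\rangle$ for all $i\in I$. Loxodromic elements of $\operatorname{Map}(\widehat K)$ on the curve graph are exactly the pseudo-Anosovs \cite{MM99}, so $\bar g$ is pseudo-Anosov.

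\textbf{Lifting.} Choose $g\in\operatorname{Stab}_H(K)$ with $\rho_K(g)=\bar g$. Then $g$ is $K$-pseudo-Anosov in the sense defined above; it has infinite order because $\bar g$ does. Fix $i\in I$ and consider the natural surjection $\langle h_i\rangle*\langle g\rangle\to\langle h_i,g\rangle$. Composing with $\rho_K$ gives a map to $\langle\bar h_i,\bar g\rangle\cong\langle\bar h_i\rangle*\langle\bar g\rangle$. This composite is the free product of the isomorphisms $\langle h_i\rangle\to\langle\bar h_i\rangle$ and $\langle g\rangle\to\langle\bar g\rangle$, hence is itself an isomorphism. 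Therefore the surjection $\langle h_i\rangle*\langle g\rangle\to\langle h_i,g\rangle$ is injective, which gives $\langle h_i,g\rangle\cong\langle h_i\rangle*\langle g\rangle$. Indices not in $I$ impose no condition, so the lemma follows.

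\textbf{Main obstacle.} The delicate point is the upgrade from bare $P_{\text{naive}}$ of $Q_H(K)$ to a witness that is pseudo-Anosov on $\widehat K$. If quoting the internal form of \cite{AD18} (the witness can be taken loxodromic for the given acylindrical action) is not acceptable, I would run a direct ping-pong instead. Start from any pseudo-Anosov $\bar f\in Q_H(K)$, which exists by non-elementarity. Each $\bar h_i$ has infinite order or finite order, and in either case does not lie in the elementary closure $E(\bar f)$ after replacing $\bar f$ by a conjugate; this uses non-elementarity together with the absence of finite normal subgroups, which kills the finite radical. Then a large power $\bar g=\bar f^N$ satisfies the ping-pong condition against each $\langle\bar h_i\rangle$ on $\mathcal C(\widehat K)$ simultaneously.
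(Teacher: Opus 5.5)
Your proposal follows the paper's proof essentially step for step: restrict to $\operatorname{Stab}_H(K)$, use that $\rho_K$ is injective on each $\langle h_i\rangle$, run the Abbott--Dahmani argument in $Q_H(K)$ on the elements $\rho_K(h_i)$, lift the witness $\bar g$ to a $K$-pseudo-Anosov $g\in\operatorname{Stab}_H(K)$, and pull the free product back; your universal-property argument is equivalent to the paper's reduced-word argument. The one correction concerns the step you flagged: in \cite{AD18} the witness $\gamma^N$ is loxodromic on a space $Y$ obtained from $\mathcal C(\widehat K)$ by finitely many cone-offs (where the $\rho_K(h_i)$ become elliptic), not directly on $\mathcal C(\widehat K)$, and the paper transfers loxodromicity back through the equivariant Lipschitz map $q:\mathcal C(\widehat K)\to Y$, since linear growth of $d_Y(q(x),\bar g^m q(x))$ forces linear growth of $d(x,\bar g^m x)$; this shows $\bar g$ is pseudo-Anosov directly and makes your fallback ping-pong unnecessary.
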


\begin{proof}
Let
\(
I=\{i\in\{1,\ldots,n\}:h_i\text{ preserves the isotopy class of }K\}.
\)
For every $i\in I$, we have $h_i\in\operatorname{Stab}_H(K)$ and hence
$\rho_K(h_i)$ is defined. Moreover,
\(
\langle h_i\rangle\cap
\ker\bigl(\rho_K|_{\operatorname{Stab}_H(K)}\bigr)
=\{1\},
\)
so the restriction of $\rho_K$ to $\langle h_i\rangle$ is injective.
In particular, $\rho_K(h_i)\neq1$ for every $i\in I$.

Apply the proof of Abbott-Dahmani~\cite[Theorem~2.3]{AD18} to the action of $Q_H(K)$ on $\mathcal C(\widehat K)$ and to the elements
\(
\rho_K(h_i), i\in I.
\)
Their proof first produces, by finitely many cone-offs, a hyperbolic
space $Y$ on which all the elements $\rho_K(h_i)$ are elliptic. It then
produces a loxodromic element $\gamma\in Q_H(K)$ such that, for some
sufficiently large $N$, the element
\(
\overline g=\gamma^N
\)
satisfies
\(
\langle\rho_K(h_i),\overline g\rangle
\cong
\langle\rho_K(h_i)\rangle*
\langle\overline g\rangle
\)
for every $i\in I$.

As in the proof of Abbott-Dahmani, the successive cone-offs give an
equivariant Lipschitz map
\(
q:\mathcal C(\widehat K)\rightarrow Y.
\)
Thus, for some $L>0$,
\[
d_Y(q(x),\overline g^m q(x))
\leq
Ld_{\mathcal C(\widehat K)}
(x,\overline g^m x).
\]
Since $\overline g$ is loxodromic on $Y$, the left-hand side grows
linearly in $|m|$. Hence the orbit of $x$ under $\overline g$ also grows
linearly in $\mathcal C(\widehat K)$, so $\overline g$ is loxodromic on
$\mathcal C(\widehat K)$. By the Nielsen-Thurston classification,
$\overline g$ is pseudo-Anosov.

Since $\overline g\in Q_H(K)$ by construction, there exists
\(
g\in\operatorname{Stab}_H(K)\subset H
\)
such that
\(
\rho_K(g)=\overline g.
\)
In particular, $g$ is $K$-pseudo-Anosov.

Fix $i\in I$, and let $w$ be a non-trivial reduced word in
$\langle h_i\rangle*\langle g\rangle$. Every non-trivial syllable
belonging to $\langle h_i\rangle$ has non-trivial image under $\rho_K$,
because $\rho_K$ is injective on $\langle h_i\rangle$. Similarly, every
non-trivial syllable belonging to $\langle g\rangle$ has non-trivial
image, since
\(
\rho_K(g)=\overline g
\)
has infinite order. Consequently, $\rho_K(w)$ is a non-empty reduced
word in
\(
\langle\rho_K(h_i)\rangle*
\langle\overline g\rangle,
\)
and hence $\rho_K(w)\neq1$. It follows that $w\neq1$.

Thus the natural homomorphism
\(
\langle h_i\rangle*\langle g\rangle
\rightarrow
\langle h_i,g\rangle
\)
is injective. Since it is clearly surjective, it is an isomorphism.
This holds for every $i\in I$.
\end{proof}

Recall that an \emph{ending lamination} on a finite type surface is a geodesic lamination $\Lambda$ which is filling (every essential simple closed curve meets $\Lambda$, equivalently $S\setminus \Lambda$ is a union of disks and
once-punctured disks) and minimal, meaning that every leaf of $\Lambda$ is dense in $\Lambda$ (equivalently, $\Lambda$ has no proper nonempty closed
sublamination).
Observe that if $\Lambda$ is an ending lamination on $K_{1}$ and on $K_{2}$ (viewed as subsurfaces of the same ambient surface), then $K_{1}=K_{2}$ and both coincide with the support of $\Lambda$.
We will need the following lemma about elements that do not preserve \([K]\)
and \(K\)-pseudo-Anosov elements of \(\operatorname{Map}(S)\).
\begin{lemma}\label{1}
    Let $S$ be a connected orientable surface of infinite type, $H \leq \operatorname{Map}(S)$ and $\{h_{1}, \ldots, h_{n}\} \subset H \setminus \{1\}$. Suppose that $K \subset S$ is a non-displaceable finite-type subsurface relative to $H$ and that $g$ is the K-pseudo-Anosov element supplied by Lemma \ref{lem:common-witness}. Let \(\mathbb{X}_{H}\) be the hyperbolic space associated with $K$ on which \(H\) admits a continuous non-elementary isometric action, as in Proposition \ref{prop:relativeHQR}, where $g$ acts as a WWPD loxodromic with fixed points $g^+, g^- \in \partial \mathbb{X}_{H}$.
    %Let $S$ be a connected orientable surface of infinite type and $K \subset S$ be a non-displaceable essential subsurface of finite type. Let \(\mathbb{X}_{H}\) be a hyperbolic space associated with $K$ on which \(H\) admits a continuous nonelementary isometric action. Let $h_1,\ldots,h_n\in H\setminus\{1\}$ and $g$ be the K-pseudo-Anosov element supplied by Lemma \ref{lem:common-witness}. 
    
    Suppose also that each $h_{i}$ %$h_i \in \operatorname{Map}(S)$ 
    does not preserve the isotopy class of $K$. Then for every \(i\in\{1,\ldots,n\}\) and every \(m\in\mathbb Z\)
    such that \(h_i^m\neq1\),
    \begin{equation}\label{eq:endpoint-separation}
        h_i^m\{g^-,g^+\}\cap\{g^-,g^+\}
        =\varnothing.
    \end{equation}
\end{lemma}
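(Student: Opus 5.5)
We identify the endpoints $g^\pm$ with the stable and unstable laminations of $\rho_K(g)$ in the piece $\mathcal{C}_S(K)$, and then argue by contradiction. The key observation is that an element moving one of these endpoints to another must carry the support of the lamination, namely $K$, onto itself.

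First we describe $g^\pm$. Since $g$ is $K$-pseudo-Anosov, by Proposition~\ref{prop:relativeHQR} it acts loxodromically on $\mathbb{X}_H$. Its axis can be taken inside the isometrically embedded piece $\mathcal{C}_S(K)\cong\mathcal C(\widehat K)$, because quasi-geodesics in $\mathcal{C}_S(K)$ remain quasi-geodesics in the total space. Hence $g^\pm$ are the images of the boundary points of $\mathcal C(\widehat K)$ fixed by $\rho_K(g)$. By Klarreich's theorem these correspond to the ending laminations $\Lambda^\pm$ of $\rho_K(g)$, which are filling and minimal on $K$. Note that $g^+\neq g^-$ and $\Lambda^+\neq\Lambda^-$.

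Now suppose that $h=h_i^m\neq 1$ satisfies $h(g^\epsilon)=g^{\delta}$ for some $\epsilon,\delta\in\{+,-\}$. By equivariance, $h$ maps the sequence $g^{k}x$, with $x\in\mathcal{C}_S(K)$ and $k\to\epsilon\infty$, onto a sequence converging to $g^\delta$. This sequence lies in the piece $\mathcal{C}_S(h(K))$.

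Here is the step we expect to be the main obstacle. We must show that if two pieces $\mathcal{C}_S(Y)$ and $\mathcal{C}_S(Y')$ of the projection complex, with $Y\neq Y'$, then the two pieces share no limit points in $\partial\mathbb{X}_H$. The approach is via the bounded projection property of the Bestvina--Bromberg--Fujiwara construction, as used in Proposition~\ref{prop:relativeHQR}. The projection of $\mathcal{C}_S(Y')$ to $\mathcal{C}_S(Y)$ has uniformly bounded diameter. Therefore a sequence in $\mathcal{C}_S(Y')$ can only fellow-travel a ray of $\mathcal{C}_S(Y)$ for a bounded distance, and their limit points must differ.

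Alternatively, one can argue at the level of laminations. The point $h(g^\epsilon)$ corresponds to the lamination $h(\Lambda^\epsilon)$, which is filling and minimal on $h(K)$. The point $g^\delta$ corresponds to $\Lambda^\delta$, which is filling and minimal on $K$. By the observation preceding Lemma~\ref{1}, an ending lamination on both $h(K)$ and $K$ forces $h(K)=K$.

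Either argument yields $h_i^m(K)=K$ up to isotopy. It then remains to contradict the hypothesis that $h_i$ does not preserve $[K]$. As stated, the hypothesis only says $h_i([K])\neq[K]$, and a power of $h_i$ could a priori stabilize $K$. We would address this in one of two ways. The first is to read the hypothesis as saying that no non-trivial power of $h_i$ preserves $[K]$, which is consistent with the case split against Lemma~\ref{lem:common-witness}. The second is to choose $K$ via Lemma~\ref{power-detect} so that the return powers of $h_i$ lie in $\operatorname{Stab}_H(K)$ and are handled there. In the second case we would note that for $h_i^m\in\operatorname{Stab}_H(K)$ the free product from Lemma~\ref{lem:common-witness} already prevents $\rho_K(h_i^m)$ from fixing or swapping the laminations of $\overline g$. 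Indeed, an element fixing or swapping them would commute with $\overline g^{2}$ up to finite index, contradicting $\langle\rho_K(h_i^m),\overline g\rangle\cong\langle\rho_K(h_i^m)\rangle*\langle\overline g\rangle$. Combining the two cases gives \eqref{eq:endpoint-separation}.
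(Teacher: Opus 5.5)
Your proposal is essentially the paper's proof. It splits according to whether $h_i^m$ stabilizes $[K]$, uses disjointness of the boundary images of distinct pieces (equivalently, comparison of the supports of $\Lambda^\pm$) in the first case, and uses the free product from Lemma~\ref{lem:common-witness} together with $E(\overline g)$ being virtually cyclic for the return powers. In that second case you should make explicit, as the paper does, the following points: $g$ comes from Lemma~\ref{lem:common-witness} applied to a generator $s_i$ of $\langle h_i\rangle\cap\operatorname{Stab}_H(K)$, since no $h_i$ itself preserves $[K]$; $\rho_K(h_i^m)\neq 1$ because $\langle h_i\rangle\cap\operatorname{Fix}_H(K)=\{1\}$, which is what Lemma~\ref{power-detect} actually provides (it does not place return powers in $\operatorname{Stab}_H(K)$), and without it the free product yields no contradiction; acylindricity upgrades a single coincidence $\rho_K(h_i^m)(\overline g^{\epsilon})=\overline g^{\delta}$ to preservation of the pair, which ``fixing or swapping'' alone does not cover; and the option of reinterpreting the hypothesis should simply be dropped.
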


\begin{proof} For each $i$, set
$R_i=\langle h_i\rangle\cap\operatorname{Stab}_{H}(K)$.
If $R_i\neq\{1\}$, choose a generator $s_i$ of $R_i$. By Lemma \ref{power-detect},
$\langle s_i\rangle\cap\operatorname{Fix}_{H}(K)=\{1\}$.
Fix \(i\) and
\(m\in\mathbb Z\) such that
\(
k:=h_i^m\neq1.
\)
There are two cases.

Suppose first that $k$ does not preserve the isotopy class of $K$.
By the Bestvina-Bromberg-Fujiwara construction, the pieces
$\mathcal{C}_S(K)$ and $\mathcal{C}_S(k\cdot K)$ are totally geodesically embedded in $\mathbb{X}_{H}$, and the
nearest-point projection between two distinct pieces has uniformly bounded
diameter \cite[Theorem A]{BBF15}. In particular, distinct pieces have disjoint
boundary images in $\mathbb X$.

Masur and Minsky \cite{MM99} proved that
\(\mathcal C_S(K)\) is hyperbolic, and Klarreich \cite{K22}
(see also Hamenst\"{a}dt \cite{HU06}) identified its boundary with the space of ending
laminations on \(K\). The points \(g^+\) and \(g^-\) therefore
correspond to the stable and unstable ending laminations
\(\Lambda^+\) and \(\Lambda^-\) of \(g\), both of which have support
\(K\).

If \(k(g^+)=g^+\), then \(k(\Lambda^+)=\Lambda^+\). Comparing supports
gives
\(
k \cdot K=K,
\)
contrary to the assumption. The same argument rules out
\(k(g^-)=g^-\).
If \(k(g^+)=g^-\), then \(k\) sends an ending lamination supported on
\(K\) to another ending lamination supported on \(K\). Again, equality
of supports gives \(k \cdot K=K\), a contradiction. The case
\(k(g^-)=g^+\) is identical. Hence
\[
k \cdot \{g^-,g^+\}\cap\{g^-,g^+\}=\varnothing.
\]

Suppose now that $k$ preserves $K$. Then $k\in R_i\setminus\{1\}$, so
$k=s_i^\ell$ for some $\ell\in\mathbb Z\setminus\{0\}$. Put
$a_i=\rho_K(s_i)$ and $x=\rho_K(k)=a_i^\ell$. Since
$\ker\rho_K=\operatorname{Fix}_{H}(K)$ by
\cite[Lemma 2.4]{HQR20} and
$\langle h_i\rangle\cap\operatorname{Fix}_{H}(K)=\{1\}$,
we have $x\neq1$. Write $\overline g=\rho_K(g)$.

If $x$ carried a fixed point of $\overline g$ in $\partial \mathbb{X}_H$ to a fixed point of $\overline g$ in $\partial \mathbb{X}_H$, then 
%either $\overline g^+$ or $\overline g^-$ to either $\overline g^+$ or $\overline g^-$, then
%one endpoint of $\overline g$ to an endpoint of $\overline g$, then
$\overline g$ and $x\overline g x^{-1}$ would share a boundary fixed point.
Since the action of $\operatorname{Map}(\widehat K)$ on
$\mathcal{C}(\widehat K)$ is acylindrical, these two loxodromic elements have the same
pair of fixed points. Hence $x$ preserves the unordered pair
$\{\overline g^-,\overline g^+\}$.
Since $\overline  g$ is WPD in $\mathcal{C}(\widehat K)$, hence WWPD in $\mathbb{X}_H$, Proposition and Definition~2.3(5) of
\cite{HM21} implies that these two fixed-point pairs are equal.

Thus $x\in E(\overline g)$. By \cite[Corollary 6.6]{DGO17}, there exists
$p\geq1$ such that $x^{-1}\overline g^p x=\overline g^{\pm p}$.
But $x=a_i^\ell\neq1$, so
$x^{-1}\overline g^p x\overline g^{\mp p}$ is both trivial and a non-empty reduced word in
$\langle a_i\rangle*\langle\bar g\rangle$, contradicting the choice of
$g$ in Lemma \ref{lem:common-witness}. Hence $k$ cannot carry a fixed point of $\overline g$ in $\partial \mathbb{X}_H$ to a fixed point of $\overline g$ in $\partial \mathbb{X}_H$.%either endpoint of $g$ to either endpoint of $g$.
\end{proof}
This motivates the following subgroup criterion.

\begin{proposition}[Subgroup criterion]\label{prop:subgroup-criterion}
Let $S$ be a connected orientable surface of infinite type and let
$H<\operatorname{Map}(S)$.   Assume that, for every finite
\(F\subset H\setminus\{1\}\), there exists a connected essential finite-type
\(K\subset S\) such that:
\begin{enumerate}
\item \(K\) is non-displaceable relative to \(H\);
\item \(\langle h\rangle\cap\operatorname{Fix}_H(K)=\{1\}\) for every \(h\in F\);
\item \(Q_H(K)\) acts non-elementarily on
      \(\mathcal C(\widehat K)\) by isometries, and has no nontrivial finite normal
      subgroup.
\end{enumerate}
Then \(H\) has property \(\pn\).
\end{proposition}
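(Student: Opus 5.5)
Fix a finite set $F=\{h_1,\ldots,h_n\}\subset H\setminus\{1\}$ and let $K$ be the subsurface supplied by the hypothesis, with properties (1)--(3). The plan is to construct $g$ in two stages. First, use Lemma \ref{lem:common-witness} to get a $K$-pseudo-Anosov $g_0\in\operatorname{Stab}_H(K)$ that already gives free products with every $h_i$ preserving $[K]$. Second, replace $g_0$ by a large power $g=g_0^N$ and run a ping-pong on $\mathbb X_H(K)$ for the $h_i$ that do not preserve $[K]$. Throughout, I split $F$ as $F=F_{\mathrm{st}}\sqcup F_{\mathrm{mv}}$ according to whether $h_i$ preserves the isotopy class of $K$.

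\emph{Stage one.} Hypothesis (3), together with Bowditch's acylindricity of $\operatorname{Map}(\widehat K)$ on $\mathcal C(\widehat K)$, lets us apply Abbott--Dahmani to $Q_H(K)$. Hypothesis (2) gives exactly the injectivity of $\rho_K$ on each $\langle h_i\rangle$, $h_i\in F_{\mathrm{st}}$. Lemma \ref{lem:common-witness} therefore yields $g_0\in\operatorname{Stab}_H(K)$ with $\rho_K(g_0)$ pseudo-Anosov and $\langle h_i,g_0\rangle\cong\langle h_i\rangle*\langle g_0\rangle$ for all $h_i\in F_{\mathrm{st}}$. The freeness also holds for any power $g_0^N$, since $\langle h_i, g_0^N\rangle\le \langle h_i,g_0\rangle$ is then visibly a free product of the factors. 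The subgroup property is automatic: $g_0\in H$ because it is a lift from $Q_H(K)=\rho_K(\operatorname{Stab}_H(K))$.

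\emph{Stage two.} By Proposition \ref{prop:relativeHQR}, $g_0$ is a WWPD loxodromic on $\mathbb X_H(K)$ with endpoints $g_0^\pm$. Lemma \ref{1} gives $h_i^m\{g_0^\pm\}\cap\{g_0^\pm\}=\varnothing$ for all $h_i^m\neq1$. I would check this for the mixed situation as well: for $h_i\in F_{\mathrm{mv}}$, a power $h_i^m$ may preserve $[K]$, and then the second case of that lemma's proof applies. This is where hypothesis (2) and the choice of $g_0$ over the return subgroup $R_i=\langle h_i\rangle\cap\operatorname{Stab}_H(K)$ enter. To make this work I would include the generator $s_i$ of $R_i$ in the finite set fed to Lemma \ref{lem:common-witness}. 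Hypothesis (2) still applies to $s_i$, since $\langle s_i\rangle\le\langle h_i\rangle$. The goal of this stage is to show that $\langle h_i\rangle$ acts with uniformly bounded ``return'' near the axis of $g_0$.

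\emph{The ping-pong.} Fix a basepoint $x$ on a quasi-axis of $g_0$. By a standard argument (as in \cite{TL24}) I would find $D$ such that, for every non-trivial power $h_i^m$, the Gromov product $(h_i^m g_0^{\pm\infty}\mid g_0^{\pm\infty})_x$ is at most $D$. Choose disjoint neighbourhoods $U^\pm$ of $g_0^\pm$ in $\mathbb X_H(K)\cup\partial\mathbb X_H(K)$, and set $V=$ the complement. Take $N$ so large that $g^{\pm N}(V)\subset U^\pm$. Then $h_i^m(U^+\cup U^-)\subset V$ for $h_i^m\ne1$, and the ping-pong lemma yields $\langle h_i,g\rangle\cong\langle h_i\rangle*\langle g\rangle$.

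\emph{Main obstacle.} The hardest step is the uniformity of $D$ over the possibly infinite cyclic group $\langle h_i\rangle$; endpoint separation for each single $m$ is not enough. If $h_i$ has finite order, finitely many $m$ suffice. If $h_i$ has infinite order, I would argue by contradiction. Suppose infinitely many $h_i^{m}$ carry $g_0^\pm$ close to $g_0^\pm$. WWPD then forces some two of them to lie in a common coset of $\operatorname{Stab}(g_0^+,g_0^-)$. So a non-trivial power $h_i^{m-m'}$ would stabilize $\{g_0^\pm\}$, contradicting Lemma \ref{1}. Making this precise, by passing from ``large Gromov product'' to the bounded-displacement condition (1) of WWPD via fellow-travelling of axes, is where I expect the real work to lie.
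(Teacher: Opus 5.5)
Your proposal follows the paper's proof: the paper also feeds the generators $s_i$ of the return subgroups $R_i=\langle h_i\rangle\cap\operatorname{Stab}_H(K)$ into Lemma~\ref{lem:common-witness}, uses Lemma~\ref{1} for endpoint separation, and passes to a large power of $g$ for the $h_i$ not preserving $[K]$; it simply cites \cite[Subsections 4.2--4.4]{TL24} for the uniform ping-pong you identify as the main obstacle. When you fill that step in, note that your WWPD coset argument needs a uniform bound on $d(x,h_i^m x)$, so it only handles elliptic $h_i$ (with differences $h_i^{m-m'}$ when $h_i^m g^+$ approaches $g^-$); for $h_i$ with unbounded orbits, use the boundary dynamics of $h_i$ instead, whose fixed points avoid $\{g^-,g^+\}$ by Lemma~\ref{1} with $m=1$, to reduce to finitely many powers.
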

\begin{proof}
Let $F=\{h_1,\ldots,h_n\}$.  Choose \(K\) satisfying assumptions \((1)-(3)\) for \(F\), let $\mathbb{X}_H$ be the hyperbolic space associated to $H$ and $K$.  For each \( h_i \in F \), where \( 1 \leq i \leq n \), we analyze the element \( h_i \) via the isometric action of \( H \) on \( \mathbb{X}_H \). 

After re-indexing, we may assume that $h_{1}, \cdots, h_{k}$ preserve $[K]$, $0 \leq k \leq n$, and $h_{k+1}, \cdots, h_{n}$ do not preserve $[K]$. For each $i$, let
$R_i=\langle h_i\rangle\cap
\operatorname{Stab}_{H}(K)$.
For every non-trivial $R_i$, choose a generator $s_i$. By Lemma~\ref{lem:common-witness}, applied simultaneously to all the non-trivial generators
$s_i$, we may choose a $K$-pseudo-Anosov element $g$ as in
Lemma~\ref{1}. In particular, $g$ works for $h_1,\ldots,h_k$, since
$R_i=\langle h_i\rangle$ whenever $h_i$ preserves $K$.
Moreover, every positive power of $g$ still works for these elements.

 By \cite[Subsections 4.2,4.3,4.4]{TL24}, a sufficiently large positive power $g$ also work for $h_{k+1}, \cdots, h_{n}$. Hence, after replacing $g$ by a sufficiently large positive power,
the resulting $K$-pseudo-Anosov element works for all $h_i$.
\end{proof}

\subsection{Examples of the subgroup criterion}

For the examples of subgroups that satisfy the subgroup criterion (and thus, have property $P_{\mathrm{naive}}$), recall the following: 
\begin{itemize}
    \item $\operatorname{PMap}(S)$ is the subgroup of the mapping class group that acts trivially on the space of ends of $S$.
    \item If $S$ is a finite-type surface with at least $2g +3$ punctures, then $\operatorname{PMap}(S)$ is torsion-free. If $g = 0$, it follows from a basic argument using consecutive Birman short exact sequences for pure mapping class groups and the fact that the pure mapping class group of a sphere minus three points is trivial. If $g \geq 1$, using a argument based on Nielsen realization and the Riemann-Hurwitz formula we have that $\operatorname{PMap}(S)$ is torsion-free; see Lu \cite[Lemma 1.1 (i)]{Lu02}. 
    \item $\operatorname{PMap}_c(S)$ is the subgroup of the mapping class group whose elements admit representatives with compact support.
    \item $\operatorname{PMap}_c(S)$ is a normal subgroup of $\operatorname{PMap}(S)$, $\operatorname{PMap}_c(S)$ is generated by all the Dehn twists along essential simple closed curves on $S$, and $\operatorname{PMap}_c(S)$ is dense in $\operatorname{PMap}(S)$ if and only if $S$ has at most one end accumulated by genus (handle-shifts between two different ends accumulated by genus cannot be approximated by elements in $\operatorname{PMap}_c(S)$). See Patel-Vlamis \cite{PV18} and Aramayona-Patel-Vlamis \cite{APV20}.
\end{itemize}

\begin{example}[Pure mapping class groups]
\label{ex:pure-map}
Let \(S\) be a connected orientable infinite-type surface containing a
connected finite-type non-displaceable subsurface. Then
\(\operatorname{PMap}(S)\) has property \(P_{\mathrm{naive}}\). Indeed, every subsurface which is non-displaceable for
\(\operatorname{Map}(S)\) is also non-displaceable relative to
\(\operatorname{PMap}(S)\). Moreover, \[\operatorname{PMap}_{c}(S)\leq \operatorname{PMap}(S),\] so every sufficiently large finite-type subsurface supports compactly supported pseudo-Anosov mapping classes and Dehn twists belonging to \(\operatorname{PMap}(S)\). Hence the corresponding finite-type quotient (which is isomorphic to $\operatorname{PMap}(\widehat{K})$ for some sufficiently complex subsurface $K$; see Figure \ref{fig:fig1} for an example) acts non-elementarily in a hyperbolic space $\mathcal{C}(\widehat{K})$. To see that it has no non-trivial finite normal subgroup recall that $\operatorname{PMap}(\widehat{K})$ is torsion-free for a sufficiently complex $K$. %\jh{The finite-type group that is acting non-elementarily is the pure mapping class group of the capped non-displaceable subsurface. Thus, we need to check that this group has no non-trivial finite normal subgroups. We know that $\operatorname{Map}(\widehat K)$ satisfies this, but that is not enough to have that $\operatorname{PMap}(\widehat K)$ satisfies this. Fortunately, $\operatorname{PMap}(\widehat K)$ is torsion-free if $\widehat K$ has enough punctures. Thus, up to enlarging $K$ again, $\operatorname{PMap}(\widehat K)$ is torsion-free and thus has no non-trivial finite normal subgroups}. 
The conclusion follows from Proposition~\ref{prop:subgroup-criterion}.
\end{example}

\begin{figure}
    \centering
    \includegraphics[width=0.5\linewidth]{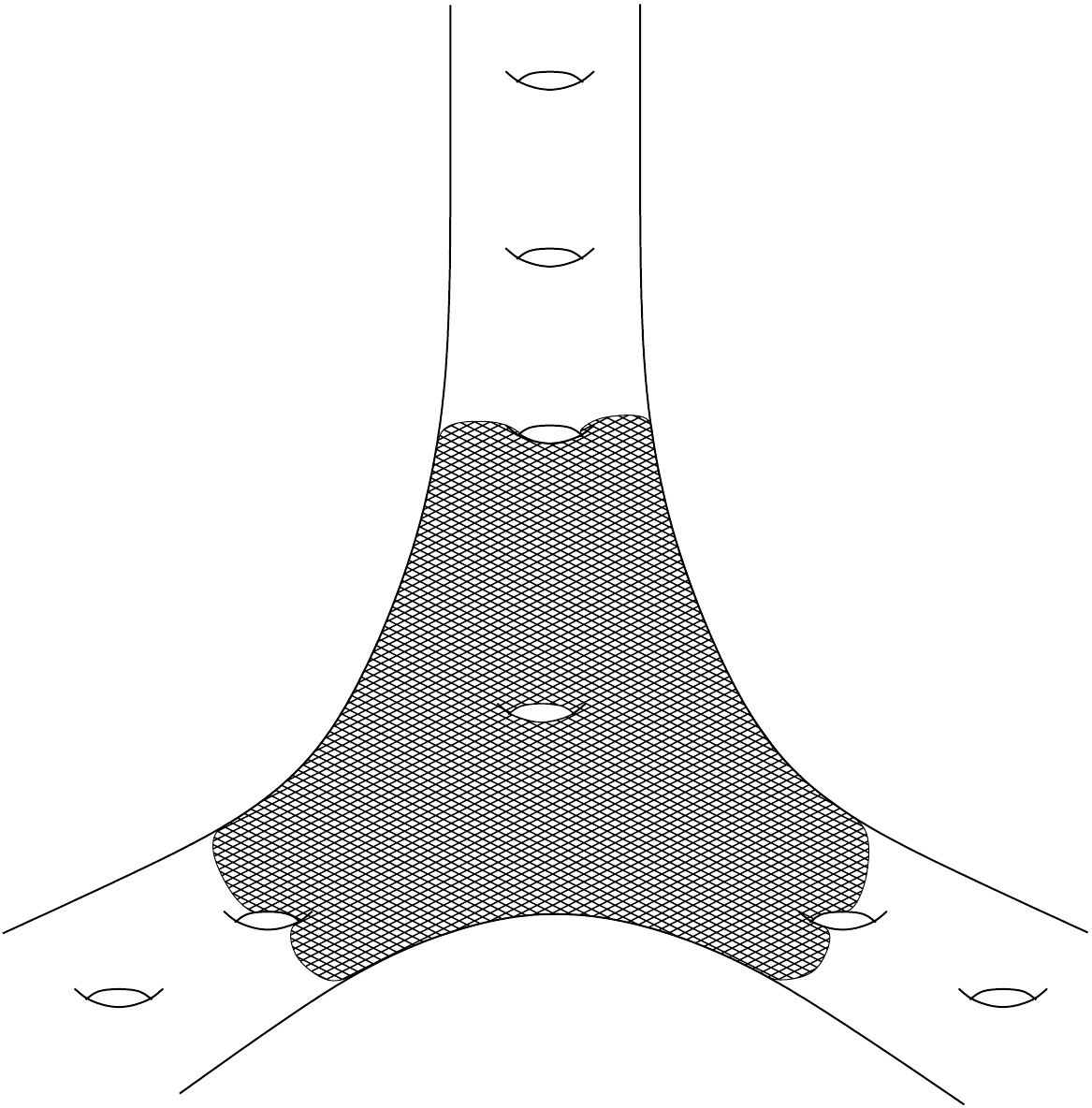}
    \caption{An infinite-type surface $S$ with a shaded non-displaceable compact subsurface.}
    \label{fig:fig1}
\end{figure}

%\begin{example}[Finite-index subgroups]
%\label{ex:finite-index}
%Let \(S\) be a connected orientable infinite-type surface containing a connected finite-type non-displaceable subsurface. Then every finite-index subgroup \( H<\operatorname{Map}(S) \) has property \(P_{\mathrm{naive}}\).

%Indeed, the given subsurface is non-displaceable relative to \(H\). Moreover, every infinite cyclic subgroup of \(\operatorname{Map}(S)\) contains a finite-index subgroup lying in \(H\). Thus suitable positive powers of compactly supported independent pseudo-Anosov mapping classes and Dehn twists belong to \(H\). The corresponding finite-type quotient therefore acts non-elementarily and has no non-trivial finite normal subgroup. Proposition \ref{prop:subgroup-criterion} gives the conclusion. \end{example}

\begin{example}
Let $S$ be a connected orientable infinite-type surface with
$|\operatorname{Ends}(S)|\geq 3$. If
\(
\operatorname{PMap}_c(S)\leq H\leq \operatorname{PMap}(S),
\)
then $H$ has property $P_{\mathrm{naive}}$.

Indeed, by \cite[Proposition~1.2]{Hill}, $S$ contains a connected
finite-type subsurface which is non-displaceable relative to
$\operatorname{PMap}(S)$, and hence relative to $H$; see Figure \ref{fig:fig2} (top left) for an example. Moreover, since
$\operatorname{PMap}_c(S)\leq H$, the same argument as above and Proposition~\ref{prop:subgroup-criterion} imply the conclusion.
%for every sufficiently large
%finite-type enlargement $K$, the quotient $Q_H(K)$ contains
%$\operatorname{PMap}(\widehat K)$. Hence $Q_H(K)$ acts
%non-elementarily and acylindrically on $\mathcal C(\widehat K)$ and,
%for $\widehat K$ sufficiently complex, has no non-trivial finite normal
%subgroup. The conclusion follows from Proposition~\ref{prop:subgroup-criterion}.
\end{example}

\begin{example}[The once-punctured Loch Ness monster]
Let $S$ be the once-punctured Loch Ness monster surface. If
\(
\operatorname{PMap}_c(S)\leq H\leq \operatorname{PMap}(S),
\)
then $H$ has property $P_{\mathrm{naive}}$.

Indeed, a finite-type subsurface containing the isolated planar end is
non-displaceable relative to $\operatorname{PMap}(S)$, and hence relative
to $H$; see Figure \ref{fig:fig2} (top right) for an example. Since $\operatorname{PMap}_c(S)\leq H$, the finite-type quotient
condition in Proposition~\ref{prop:subgroup-criterion} holds after taking a sufficiently
large enlargement. Thus $H$ has property $P_{\mathrm{naive}}$.
\end{example}

\begin{example}
    Let $S$ be such that it has more than one end. If $\operatorname{PMap}_c(S)\leq H\leq \overline{\operatorname{PMap}_c(S)}$, then $H$ has property $P_{\mathrm{naive}}$.
    
    Indeed, since $H$ acts trivially on the ends and does not contain any handle-shift between different ends accumulated by genus, in this case there exists a finite-type subsurface $K$ that is non-displaceable relative to $H$; see Figure \ref{fig:fig2} (bottom left and bottom right) for an example. By the same argument as in the previous examples, the conclusion follows.
\end{example}

\begin{figure}
    \centering
    \includegraphics[width=0.4\linewidth]{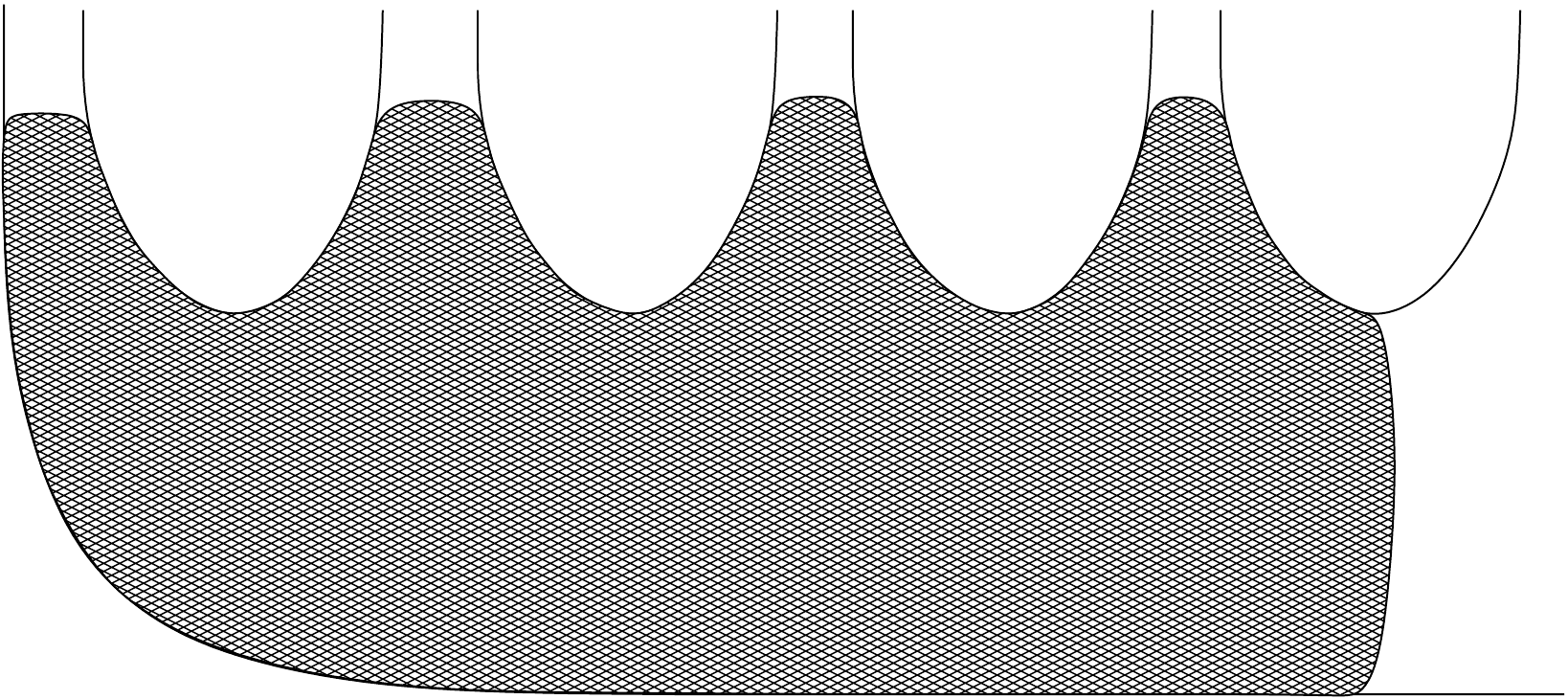} \hspace{5mm}
    \includegraphics[width=0.4\linewidth]{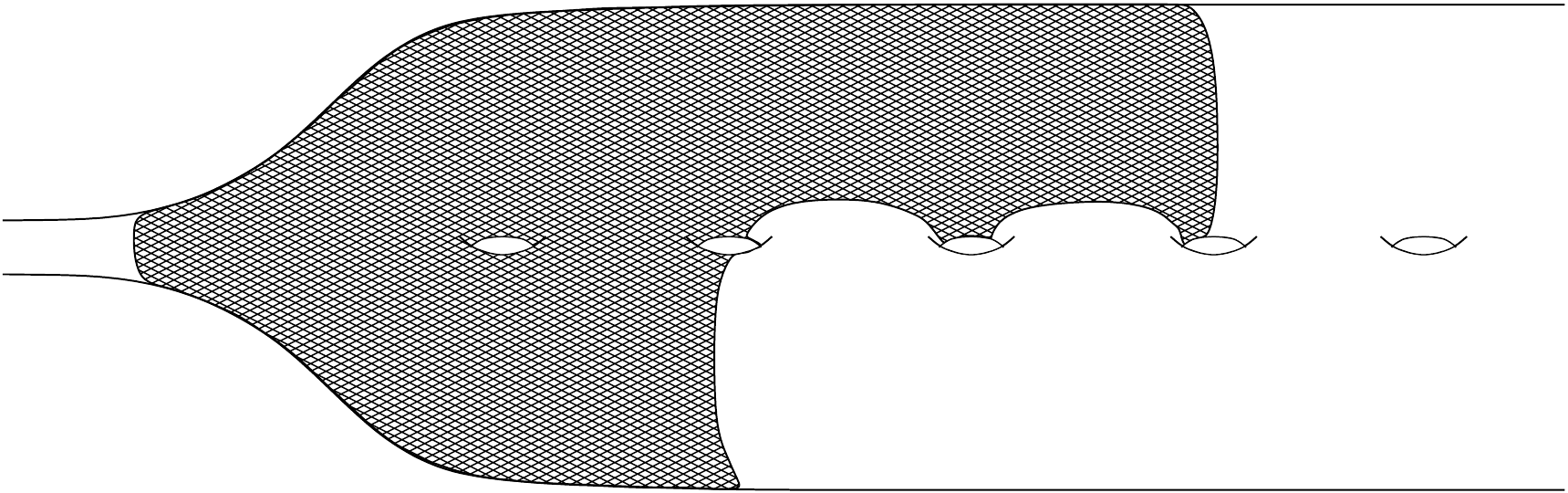} \\[3mm]    
    \includegraphics[width=0.4\linewidth]{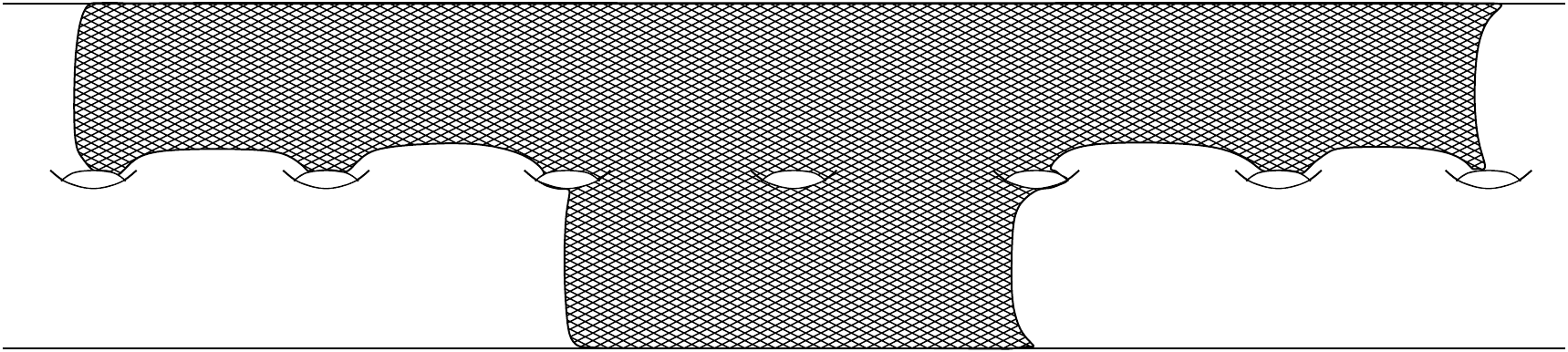} \hspace{5mm}
    \includegraphics[width=0.4\linewidth]{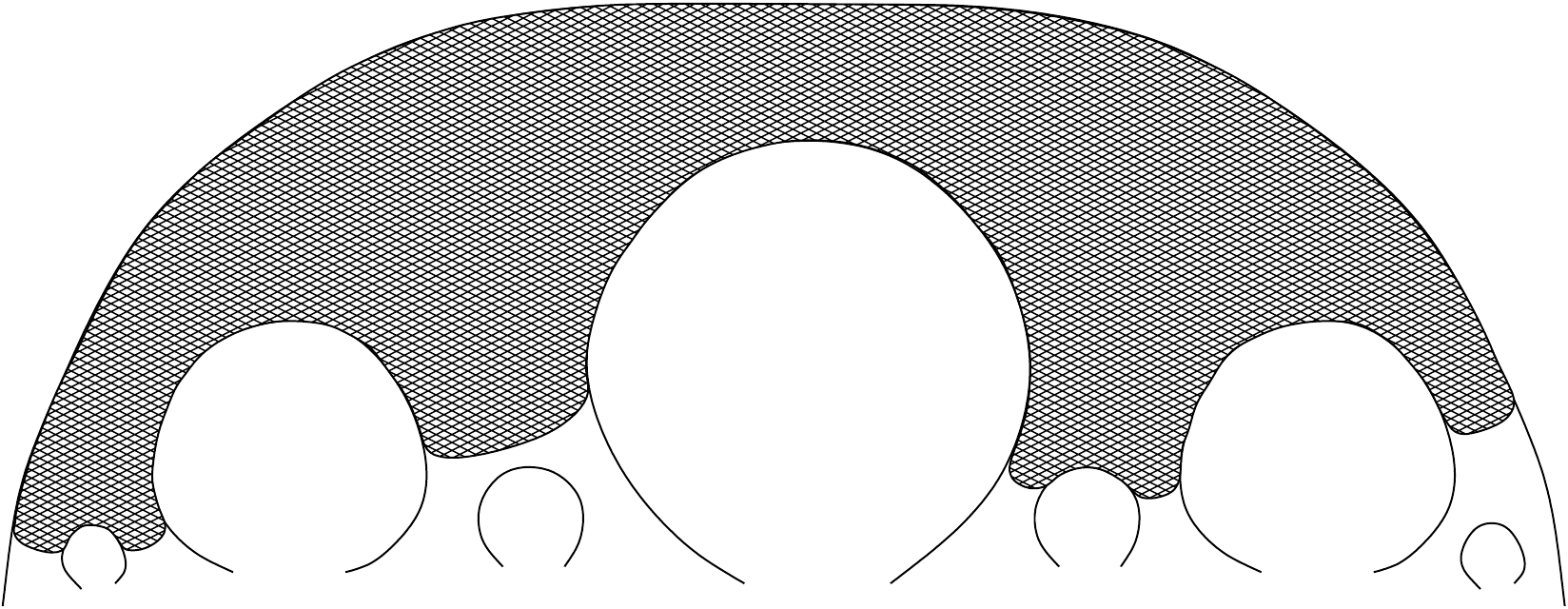}
    \caption{Infinite-type subsurface with shaded non-displaceable compact subsurface relative to the subgroups $\operatorname{PMap}_c(S) \leq H \leq \overline{\operatorname{PMap}_c(S)}$.}
    \label{fig:fig2}
\end{figure}

The subgroup criterion does not apply to arbitrary subgroups. For example,
if \(H\) is generated by commuting Dehn twists supported in a finite type
subsurface, then \(H\) may be abelian. Such a subgroup cannot have
\(P_{\text{naive}}\). Thus the existence of a non-displaceable subsurface
for the ambient group does not by itself imply \(P_{\text{naive}}\) for
every subgroup.

\section{The non-orientable case}\label{sec:nonorientable}

Let $N$ be a connected non-orientable surface of infinite type.  We now pass
to its orientation double cover and apply
Proposition \ref{prop:subgroup-criterion} to the lifted subgroup.

\subsection{The orientation double cover and the lift homomorphism}

The orientation double cover is the two-sheeted cover
\(
p:\widetilde N\rightarrow N
\)
defined by
\[
 \widetilde N
 =\{(x,o_x):x\in N,\ o_x\text{ is a local orientation at }x\}, ~p(x,o_x)=x.
\]
Its deck involution is
\(
       \tau(x,o_x)=(x,-o_x).
\)
The surface $\widetilde N$ is orientable, and if $N$ is of infinite type then
so is $\widetilde N$. To emphasize the orientability of $\widetilde N$, we now denote the orientation double cover of $N$ simply by $S$, i.e. $S := \widetilde N$.

\begin{lemma}\label{lem:canonical-lift}
Let $N$ be non-orientable and let $p:S\to N$ be its orientation
double cover.  Every mapping class $f\in\operatorname{Map}(N)$ has a canonical
orientation-preserving lift $\widetilde f\in\operatorname{Map}(S)$.  This gives a
homomorphism
\[
       \iota:\operatorname{Map}(N)\longrightarrow\operatorname{Map}(S),
       ~ f\longmapsto\widetilde f,
\]
and
\(\widetilde f\,\tau=\tau\,\widetilde f
\)
in the extended mapping class group of $S$.  In particular, if $N$ is of
infinite type, then $\iota$ is injective.
\end{lemma}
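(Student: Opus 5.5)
We will construct the lift directly from the covering space theory of $p:S\to N$, then check that it is well defined on mapping classes, multiplicative, commutes with $\tau$, and is injective.

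\emph{Existence and canonicity.} Let $\phi\in\operatorname{Homeo}(N)$. Since $S$ is the space of pairs $(x,o_x)$ with $o_x$ a local orientation at $x$, and a homeomorphism carries local orientations to local orientations, we may define
\[
\widetilde\phi(x,o_x)=(\phi(x),\phi_*o_x).
\]
This is a homeomorphism of $S$ covering $\phi$, i.e. $p\widetilde\phi=\phi p$. It is orientation-preserving for the tautological orientation of $S$, which at $(x,o_x)$ is $o_x$ itself. The only other lift of $\phi$ is $\tau\widetilde\phi$, and it reverses orientation because $\tau$ does. So $\widetilde\phi$ is the unique orientation-preserving lift. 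Naturality of $\phi\mapsto\phi_*$ gives $\widetilde{\phi\psi}=\widetilde\phi\,\widetilde\psi$ and $\widetilde\phi\,\tau=\tau\,\widetilde\phi$ at the level of homeomorphisms, so the commutation relation holds already before passing to isotopy classes.

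\emph{Well-definedness.} If $\phi_t$ is an isotopy in $\operatorname{Homeo}(N)$ from $\mathrm{Id}$ to $\phi$, then the formula above applied to each $\phi_t$ gives a continuous path $\widetilde{\phi_t}$ in $\operatorname{Homeo}(S)$ from $\mathrm{Id}$ to $\widetilde\phi$. Hence $\operatorname{Homeo}_0(N)$ lifts into $\operatorname{Homeo}_0(S)$, and $\iota$ descends to a homomorphism $\operatorname{Map}(N)\to\operatorname{Map}(S)$. Continuity of the lifted path in the compact-open topology is a routine check using local triviality of $p$.

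\emph{Injectivity.} This is the main obstacle. Suppose $\widetilde f$ is isotopic to $\mathrm{Id}_S$. Because the isotopy upstairs need not be $\tau$-equivariant, it cannot simply be pushed down. For this step we would rely on \cite{CHR26}, which proves that the canonical lift embeds $\operatorname{Map}(N)$ in $\operatorname{Map}(S)$ for infinite-type $N$; the statement asserts exactly what that reference provides.

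If a self-contained argument is wanted, I would proceed as follows.
\begin{enumerate}
\item Exhaust $N$ by compact essential subsurfaces $N_j$ with preimages $S_j=p^{-1}(N_j)$.
\item Since $\widetilde f$ fixes every isotopy class of curve in $S$, and the preimage of any two-sided or one-sided curve of $N$ is a $\tau$-invariant multicurve, $f$ fixes every curve class of $N$. One-sided curves are handled by their connected preimages.
\item Apply the Alexander method for infinite-type non-orientable surfaces, or the finite-type Birman--Hilden/MacLachlan--Harvey type injectivity on each $N_j$ together with a stabilization argument, to conclude that $f$ is isotopic to the identity.
\end{enumerate}
The delicate points are the exceptional low-complexity pieces and the fact that a mapping class fixing all curves on a finite-type piece may be a nontrivial hyperelliptic-type involution. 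This is precisely why the statement requires $N$ to be of infinite type: in that case the exceptional behaviour on any finite piece is ruled out by enlarging the piece.
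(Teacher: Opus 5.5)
Your proposal is correct and follows the paper's proof. It uses the same explicit formula $\widetilde\phi(x,o_x)=(\phi(x),\phi_*o_x)$, singles it out as the orientation-preserving one of the two lifts $\widetilde\phi,\tau\widetilde\phi$, checks commutation with $\tau$ at the level of homeomorphisms, and takes injectivity from \cite[Corollary 5.4]{CHR26}; your isotopy-lifting paragraph simply spells out the descent to mapping classes that the paper asserts without comment. The optional self-contained sketch is not needed, and as written its step 2 would still have to justify that $p^{-1}(f(c))\simeq p^{-1}(c)$ in $S$ forces $f(c)\simeq c$ in $N$ (for instance via geodesic representatives for a $\tau$-invariant hyperbolic metric).
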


\begin{proof}
For a representative homeomorphism $f:N\to N$, define
\[
       \widetilde f(x,o_x)=(f(x),f_*(o_x)).
\]
This is the unique orientation-preserving lift of $f$ and is compatible with
composition.  It also satisfies
\[
 \widetilde f\tau(x,o_x)
 =(f(x),-f_*(o_x))
 =\tau\widetilde f(x,o_x).
\]
Thus the construction descends to a homomorphism on mapping class groups.
Injectivity in the infinite-type case is
\cite[Corollary 5.4]{CHR26}; see also the summary statement
\cite[Corollary 5]{CHR26}.
\end{proof}

Henceforth put
\(
G:=\iota(\operatorname{Map}(N))\leq\operatorname{Map}(S).
\)
The point of working with $G$, rather than with the whole group
$\operatorname{Map}(S)$, is that every element constructed in $G$ descends
automatically to $N$.

\begin{lemma}\label{lem:lift-K-connected}
Let $K\subset N$ be a connected non-orientable finite-type subsurface and
put $\widetilde K=p^{-1}(K)$.  Then $\widetilde K$ is connected and
$p|_{\widetilde K}:\widetilde K\to K$ is the orientation double cover of
$K$.
\end{lemma}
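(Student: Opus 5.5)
The argument is point-set topology of covering spaces, using the description of $S=\widetilde N$ as the space of pairs $(x,o_x)$. First I identify $p^{-1}(K)$ with the orientation double cover of $K$. Because $K$ is a subsurface, a local orientation at a point $x\in K$ (including boundary points, via a collar) is the same as a local orientation of $N$ at $x$. Hence
\[
p^{-1}(K)=\{(x,o_x): x\in K,\ o_x \text{ a local orientation at } x\}
\]
is, as a space over $K$, exactly the orientation double cover $\widetilde K_{\mathrm{or}}\to K$. The topologies agree because both are defined by the same basic open sets: pairs $(U,o)$ with $U$ a connected coordinate neighbourhood carrying a consistent orientation $o$. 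Intersecting with $K$, or taking the neighbourhood inside $K$, gives the same sets. So $p|_{\widetilde K}:\widetilde K\to K$ is a two-sheeted covering and is the orientation double cover of $K$, with deck involution $\tau|_{\widetilde K}$.

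Next I prove connectedness by standard covering theory, using the orientation character $w_1:\pi_1(K,x)\to\mathbb Z/2$. A loop $\gamma$ in $K$ lifts to a closed loop in $\widetilde K$ exactly when transporting a local orientation around $\gamma$ returns it unchanged, that is, when $w_1(\gamma)=0$. Since $K$ is connected and non-orientable, there is an orientation-reversing loop $\gamma$ in $K$, for instance the core of an embedded Möbius band. The lift of $\gamma$ starting at $(x,o_x)$ ends at $(x,-o_x)$, so the two points of each fibre lie in the same path component of $\widetilde K$. Every point of $\widetilde K$ can be joined to a point of the fibre over $x$ by lifting a path in the path-connected set $K$. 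Therefore $\widetilde K$ is path-connected, hence connected.

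Finally I would note two facts for later use. Since $K$ has finite type, $\widetilde K$ is a finite-type orientable subsurface of $S$, and it is invariant under $\tau$. The only point needing care is that a non-orientable $K$ contains an orientation-reversing loop \emph{inside $K$}, not merely inside $N$. This follows directly from non-orientability of $K$ itself (an embedded Möbius band in $K$), so I expect no real obstacle. A lighter-weight alternative is to argue by contradiction: if $\widetilde K$ were disconnected it would be two copies of $K$ swapped by $\tau$. Then $p$ would restrict to a homeomorphism from one component onto $K$, and that component inherits an orientation from the orientable surface $S$, so $K$ would be orientable, a contradiction.
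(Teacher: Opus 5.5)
Your proposal is correct. The identification of $p|_{\widetilde K}$ with the orientation double cover of $K$ is what the paper dismisses as ``by construction''. Your closing ``lighter-weight alternative'' is exactly the paper's proof: if $\widetilde K$ were disconnected, each component would map homeomorphically onto $K$, and the orientation of one component would descend to $K$.

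Your main argument takes a different, direct route to connectedness. You lift an orientation-reversing loop in $K$ to a path from $(x,o_x)$ to $(x,-o_x)$. You then use path-lifting to join every point of $\widetilde K$ to the fibre over $x$. This makes the monodromy of the cover explicit (it is the orientation character $w_1$) and gives path-connectedness outright. The cost is the standard fact that a connected non-orientable surface contains an orientation-reversing loop, and you rightly note it must lie in $K$ itself.

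The paper's contradiction argument needs no loop. It uses only two facts: each component of a finite cover of a connected base is itself a cover, and a section of the orientation cover is tautologically an orientation. Both arguments are complete for this statement.
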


\begin{proof}
The restriction is the orientation double cover by construction.  If
$\widetilde K$ were disconnected, each component would map
homeomorphically onto $K$.  The orientation of either component would then
induce an orientation on $K$, contradicting non-orientability.
\end{proof}

\begin{lemma}\label{lem:nondisp-lifts}
Let \(H<\operatorname{Map}(N)\) and put
\(\widetilde H=\iota(H)\). Suppose that
\(K\subset N\) is a connected essential non-orientable finite-type
subsurface with \(g+b>5\). If \(K\) is non-displaceable
relative to \(H\), then
\(\widetilde K=p^{-1}(K)\) is non-displaceable relative to
\(\widetilde H\).
\end{lemma}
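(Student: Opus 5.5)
The plan is to argue by contradiction and push a displacing homeomorphism of $S$ down to $N$. Suppose $\widetilde K$ is displaceable relative to $\widetilde H$. Then there is $h\in H$ and a representative $\psi\in\operatorname{Homeo}(S)$ of $\iota(h)$ with $\psi(\widetilde K)\cap\widetilde K=\varnothing$. The first step is to replace $\psi$ by the canonical lift $\widetilde f$ of some representative $f\in\operatorname{Homeo}(N)$ of $h$. By Lemma~\ref{lem:canonical-lift}, $\widetilde f$ and $\psi$ represent the same class in $\operatorname{Map}(S)$, so they are isotopic. Since $\widetilde f$ commutes with $\tau$ and $\widetilde f(\widetilde K)=p^{-1}(f(K))$, we get $p(\widetilde f(\widetilde K))=f(K)$. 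Hence if $\widetilde f(\widetilde K)$ could be isotoped off $\widetilde K$, we would want $f(K)$ to be isotopable off $K$, which contradicts non-displaceability of $K$ relative to $H$.

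The issue is that isotopy in $S$ is not obviously $\tau$-equivariant. Displaceability of $\widetilde K$ really says that the isotopy classes $[\widetilde K]$ and $[\widetilde f(\widetilde K)]$ admit disjoint representatives. Both classes are $\tau$-invariant, since $\widetilde K=p^{-1}(K)$ and $\widetilde f\tau=\tau\widetilde f$. I would reduce to curves. Let $\partial K=\{c_1,\dots,c_b\}$; these are two-sided essential curves. Their preimages $p^{-1}(c_j)$ form a $\tau$-invariant multicurve whose union is $\partial\widetilde K$. The classes $[\widetilde K]$ and $[\widetilde f(\widetilde K)]$ have disjoint representatives exactly when the boundary multicurves have zero geometric intersection and the surfaces are not nested. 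Choose a $\tau$-invariant hyperbolic metric on $S$, obtained by lifting a hyperbolic metric on $N$. Take geodesic representatives. These are $\tau$-invariant and realize minimal intersection, so they project to geodesic representatives of $\partial K$ and $\partial f(K)$ in $N$. Then
\[
i\bigl(\partial\widetilde K,\partial\widetilde f(\widetilde K)\bigr)=0 \implies i\bigl(\partial K,\partial f(K)\bigr)=0.
\]
So geodesic representatives of $K$ and $f(K)$ in $N$ have disjoint boundaries. It then suffices to show that these geodesic subsurfaces are disjoint rather than nested or equal. If they were nested or equal, the same would hold for their preimages, since preimages preserve containment. This contradicts the existence of disjoint representatives upstairs, because two nested, distinct isotopy classes of essential subsurfaces with overlapping interiors cannot become disjoint after isotopy.

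The main obstacle is this last point, together with the role of the hypothesis $g+b>5$. I expect the bound is used to rule out degenerate cases in two ways. First, it ensures that $\widetilde K$ is not an annulus or pair of pants whose isotopy class is determined by very little data. Second, it ensures that $K$ and $f(K)$, being homeomorphic, cannot be properly nested; the argument for this is the one given earlier for orbits $\mathbf Y_{H,K}$, with no complementary disks, once-punctured disks or annuli. I would also check that $\widetilde K$ is essential in $S$, meaning no boundary component of $\widetilde K$ bounds a disk, a once-punctured disk, or is peripheral. This should follow from essentiality of $K$ and the fact that a Möbius band lifts to an annulus, which is the only place where one-sided phenomena could interfere.

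Having reduced to geodesic representatives with disjoint boundaries, the conclusion would follow. We would have $f(K)\cap K=\varnothing$ up to isotopy, contradicting the relative non-displaceability of $K$ with respect to $H$.
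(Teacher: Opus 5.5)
Your route, via boundary multicurves and a $\tau$-invariant hyperbolic metric, differs from the paper's. Its first half is sound: $\widetilde f(\widetilde K)=p^{-1}(f(K))$, the geodesic representatives of $p^{-1}(\partial K)$ are preimages of geodesics in $N$, and so $i(\partial\widetilde K,\partial\widetilde f(\widetilde K))=0$ gives $i(\partial K,\partial f(K))=0$.

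\textbf{The gap.} The failing step is ``it then suffices to show that these geodesic subsurfaces are disjoint rather than nested or equal.'' Two connected essential subsurfaces with disjoint boundaries need not be disjoint, nested or equal. Each can contain some boundary components of the other in its interior, while their intersection is an essential subsurface. For instance, in a closed genus-$3$ surface let $A$ and $B$ be the complements of two disjoint one-holed tori. Then $A\cong B$, $\partial A\subset\operatorname{int}B$ and $\partial B\subset\operatorname{int}A$, so neither is nested in the other. Moreover $A\cap B$ is a two-holed torus, so $A$ and $B$ cannot be isotoped apart. Nothing in your argument excludes this configuration for the geodesic representatives of $K$ and $f(K)$. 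It is the same error as your criterion ``disjoint representatives exactly when the boundaries have zero intersection and the surfaces are not nested,'' whose ``if'' direction is false: boundary curves alone do not detect this kind of overlap.

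\textbf{How the paper avoids it, and how to repair your version.} The paper uses curves that fill $K$ rather than its boundary. The hypothesis $g+b>5$ gives a filling pair of two-sided curves $a,b\subset\operatorname{int}(K)$ \cite[Lemma~2.1]{EGPS23}. The identity $i_S(p^{-1}(c),p^{-1}(f(d)))=2\,i_N(c,f(d))$ then forces $\{a,b\}$ and $\{f(a),f(b)\}$ to be disjoint, and filling curves do push $K$ off $f(K)$. Within your framework, replace the trichotomy by a stronger fact. Let $A,B$ be essential subsurfaces that are not annuli or pairs of pants, and let $A^{*},B^{*}$ be the subsurfaces bounded by the geodesic representatives of their boundaries. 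Then $A,B$ admit disjoint representatives if and only if $A^{*},B^{*}$ have disjoint interiors. Sketch of the forward direction:
\begin{enumerate}
\item A boundary geodesic of $B^{*}$ lying in $\operatorname{int}A^{*}$ must be peripheral in $A^{*}$. Otherwise some curve of $A$ crosses it, contradicting zero intersection.
\item Hence $\partial B^{*}\cap\operatorname{int}A^{*}=\varnothing$, and symmetrically $\partial A^{*}\cap\operatorname{int}B^{*}=\varnothing$.
\item By connectedness the interiors are then disjoint or nested.
\item Nesting is excluded by a pair of intersecting non-peripheral curves in the smaller subsurface.
\end{enumerate}
Write $K^{*}$ and $f(K)^{*}$ for the geodesic representatives in $N$ for the projected metric. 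For your $\tau$-invariant metric, the corresponding subsurfaces upstairs are $p^{-1}(K^{*})$ and $p^{-1}(f(K)^{*})$. So disjointness of interiors descends through $p$ immediately, and the nested/equal case analysis becomes unnecessary.
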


\begin{proof}
Let \(\Phi\in\operatorname{Homeo}^{+}(S)\) be an arbitrary
representative of an element of \(\widetilde H\). Choose \(h\in H\),
a representative \(f\in\operatorname{Homeo}(N)\) of \(h\), and let
\(\widetilde f\) be its canonical lift. Then
\(
[\Phi]=\iota(h)=[\widetilde f].
\)

Since \(K\) is non-exceptional, there exist two-sided essential curves \(a,b\subset\operatorname{int}(K)\) which fill \(K\)
\cite[Lemma~2.1]{EGPS23}. For any such two two-sided essential curves
\(a,b\subset N\),
\[
i_S\bigl(p^{-1}(a),p^{-1}(b)\bigr)
   =2\,i_N(a,b).
\]

Suppose, towards a contradiction, that
\(\Phi(\widetilde K)\cap\widetilde K=\varnothing\). For every
\(c,d\in\{a,b\}\), the multicurves \(p^{-1}(c)\) and
\(\Phi(p^{-1}(d))\) are disjoint. Since \(\Phi\) is isotopic to
\(\widetilde f\), it follows that
\[
0
=i_S\bigl(p^{-1}(c),p^{-1}(f(d))\bigr)
=2\,i_N(c,f(d)).
\]
Thus every curve in \(\{a,b\}\) has zero geometric intersection with
every curve in \(\{f(a),f(b)\}\). After placing these curves in minimal position, the two filling pairs are
disjoint. Since they fill \(K\) and \(f(K)\), respectively, the
subsurfaces \(K\) and \(f(K)\) admit disjoint representatives.

By isotopy extension, there is a representative \(f_0\) of \(h\)
such that
\(
f_0(K)\cap K=\varnothing,
\)
contradicting the non-displaceability of \(K\) relative to \(H\).
Therefore every representative of every element of \(\widetilde H\)
meets \(\widetilde K\), as required.
\end{proof}

The next lemma is the point at which the enlargement mechanism from the
orientable proof is adapted to the orientation double cover.

\begin{lemma}\label{lem:tau-powerdetect}
Let $N$ be a connected non-orientable surface of infinite type,
let $p:S\to N$ be its orientation double cover, and let
$H\leq\operatorname{Map}(N)$. Put $\widetilde H=\iota(H)$.
Suppose that $K_0\subset N$ is a connected non-orientable finite-type
subsurface which is non-displaceable relative to $H$.

For every finite set
$F=\{f_1,\ldots,f_n\}\subset H\setminus\{1\}$,
there exists a connected essential non-orientable finite-type
subsurface $K\supset K_0$, still non-displaceable relative to $H$,
such that, with $\widetilde K=p^{-1}(K)$,
\[
\langle\iota(f_i)\rangle
\cap\operatorname{Fix}_{\widetilde H}(\widetilde K)
=\{1\}
\qquad (1\leq i\leq n).
\]
Moreover, $\widetilde K$ is connected, essential and
$\tau$-invariant.
\end{lemma}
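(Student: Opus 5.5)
Our plan mirrors the proof of Lemma~\ref{power-detect}, with the selection of detecting curves done downstairs in $N$ and the subsurface then lifted. Fix $i$. Since $\iota$ is an injective homomorphism (Lemma~\ref{lem:canonical-lift}), $\iota(f_i)^m=\iota(f_i^m)$ is non-trivial exactly when $f_i^m\neq 1$. It therefore suffices to find, for each non-trivial power $f_i^m$, a curve in $N$ that it moves. We then choose $K$ containing all these curves, and show that an element of $\operatorname{Fix}_{\widetilde H}(\widetilde K)$ coming from $\langle\iota(f_i)\rangle$ descends to an element of $\langle f_i\rangle$ fixing those curves.

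For the curve selection we follow Lemma~\ref{power-detect} verbatim, but in $N$. If $f_i$ has infinite order, we need an essential curve $c_i\subset N$ with infinite $\langle f_i\rangle$-orbit. To obtain it, we apply \cite[Lemma 12]{ACCL21} to $\iota(f_i)$ in $S$, which gives a curve $\tilde c$ with infinite orbit. The image $p(\tilde c)$ is a closed curve, and any essential curve of $N$ meeting it non-trivially has an infinite orbit. Alternatively, we can use that $\operatorname{Map}(N)$ acts faithfully on $\mathcal C(N)$ for infinite-type $N$, and run the standard argument that an element with all curve orbits finite has finite order. If $f_i$ has finite order $r_i$, then for $1\le m<r_i$ faithfulness of the action of $\operatorname{Map}(N)$ on curves gives $c_{i,m}$ with $f_i^m(c_{i,m})\neq c_{i,m}$. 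Let $K$ be a connected finite-type subsurface containing $K_0$ and all these finitely many curves. We enlarge it to be essential, and make it non-orientable by adjoining a one-sided curve if needed; $K_0$ is already non-orientable, so this holds automatically. Since $K\supset K_0$, it is non-displaceable relative to $H$. Lemma~\ref{lem:lift-K-connected} gives that $\widetilde K=p^{-1}(K)$ is connected. It is $\tau$-invariant as a full preimage, and it is essential because boundary curves of $\widetilde K$ are lifts of essential boundary curves of $K$: a lift bounding a disk, once-punctured disk, or annulus would project to such a region, or to a Möbius band, which essentiality of $K$ excludes.

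For detection, suppose $\iota(f_i)^m\in\operatorname{Fix}_{\widetilde H}(\widetilde K)$, with representative $\Psi$ equal to the identity on $\widetilde K$. Then $\iota(f_i^m)$ fixes each multicurve $p^{-1}(c)$ with $c$ one of the chosen curves in $K$. Since $\iota(f_i^m)$ is the canonical lift of $f_i^m$, we have $\iota(f_i^m)(p^{-1}(c))=p^{-1}(f_i^m(c))$. Injectivity of $c\mapsto p^{-1}(c)$ on isotopy classes, for example via the intersection-number identity $i_S(p^{-1}(a),p^{-1}(b))=2i_N(a,b)$ used in Lemma~\ref{lem:nondisp-lifts} together with the fact that curves are determined by their intersection numbers, then gives $f_i^m(c)=c$. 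By the choice of curves this forces $f_i^m=1$, hence $\iota(f_i)^m=1$.

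The main obstacle is this descent step: a homeomorphism of $S$ isotopic to $\iota(f_i^m)$ and fixing $\widetilde K$ pointwise need not be $\tau$-equivariant. We avoid the issue by working only with isotopy classes of the lifted multicurves. The delicate point is justifying that $p^{-1}$ is injective on isotopy classes of (one- and two-sided) curves. For one-sided $c$, the preimage $p^{-1}(c)$ is a single curve double covering $c$, and we would compare via intersection numbers with two-sided curves, or cite \cite{CHR26} for this.
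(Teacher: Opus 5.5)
Your route differs from the paper's: you choose the detecting curves downstairs in $N$, then descend a relation $\iota(f_i)^m\in\operatorname{Fix}_{\widetilde H}(\widetilde K)$ through the isotopy classes of the $\tau$-invariant multicurves $p^{-1}(c)$. The descent step is sound, and you are right that no $\tau$-equivariance of the fixing representative is needed. The gap is the curve-selection step for infinite-order $f_i$, which does not work as written. Your claim that every essential curve of $N$ meeting $p(\tilde c)$ has infinite $\langle f_i\rangle$-orbit is false. Take $f_i=T_a$ with $a$ two-sided and $p^{-1}(a)=\tilde a\sqcup\tau(\tilde a)$, so that $\iota(f_i)=T_{\tilde a}T_{\tau(\tilde a)}^{-1}$ up to inversion. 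Any $\tilde c$ with infinite orbit must cross $p^{-1}(a)$, so $p(\tilde c)$ meets $a$ essentially, yet $T_a(a)=a$. Your fallback needs two facts about $N$ itself: a non-orientable analogue of \cite[Lemma 12]{ACCL21} (an element of $\operatorname{Map}(N)$ all of whose curve-orbits are finite has finite order), and faithfulness of $\operatorname{Map}(N)$ on $\mathcal C(N)$. Both are plausible; the first can be reduced to the orientable statement by an Alexander-method argument on finite-type subsurfaces of $N$. But you only assert them, and nothing in the paper supplies them.

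The paper avoids this entirely by selecting upstairs. It applies Lemma~\ref{power-detect} to $\widetilde H$, $\widetilde K_0$ and $\iota(F)$ to get $L\supset\widetilde K_0$, takes a $\tau$-invariant regular neighbourhood of $L\cup\tau(L)$, and projects and enlarges it to $K$. Then $L\subset\widetilde K$ gives $\operatorname{Fix}_{\widetilde H}(\widetilde K)\subseteq\operatorname{Fix}_{\widetilde H}(L)$. There is no descent, so neither a non-orientable curve-orbit lemma nor injectivity of $[c]\mapsto[p^{-1}(c)]$ is needed. The cheapest repair of your proof is this hybrid. Keep $\tilde c$ upstairs, and for finite-order $f_i$ take curves $\tilde c_{i,m}$ moved by $\iota(f_i)^m$, using faithfulness on $\mathcal C(S)$. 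Then require $K$ to contain regular neighbourhoods of their projections. Now these curves lie in $\widetilde K$, and any element of $\langle\iota(f_i)\rangle\cap\operatorname{Fix}_{\widetilde H}(\widetilde K)$ fixes them directly. If you keep the descent, justify injectivity of $[c]\mapsto[p^{-1}(c)]$ by lifting a hyperbolic metric from $N$. Then $\tau$ is an isometry and $p^{-1}(c^*)$ is the geodesic representative of $p^{-1}(c)$, which treats one-sided and two-sided curves uniformly; this is cleaner than intersection numbers. One thing your downstairs route does buy: it never needs $\widetilde K_0$ to be non-displaceable relative to $\widetilde H$. The paper's proof invokes Lemma~\ref{lem:nondisp-lifts}, whose hypothesis $g+b>5$ is not part of this statement. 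That is harmless there, since the proof of Lemma~\ref{power-detect} only carries non-displaceability along.
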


\begin{proof}
Put $\widetilde K_0=p^{-1}(K_0)$. By
Lemmas~\ref{lem:lift-K-connected} and~\ref{lem:nondisp-lifts},
the subsurface $\widetilde K_0$ is connected and non-displaceable
relative to $\widetilde H$. Since $\iota$ is injective, we may apply
Lemma~\ref{power-detect} to $\widetilde H$, $\widetilde K_0$ and
$\iota(F)$. We obtain a connected essential finite-type subsurface
$L\supset\widetilde K_0$ such that
\[
\langle\iota(f_i)\rangle
\cap\operatorname{Fix}_{\widetilde H}(L)=\{1\}
\qquad (1\leq i\leq n).
\]

Take a connected $\tau$-invariant finite-type regular neighborhood
of $L\cup\tau(L)$ and project it to $N$. Enlarge its image, if
necessary, to a connected essential non-orientable finite-type
subsurface $K$ containing $K_0$, and put
$\widetilde K=p^{-1}(K)$. Then $\widetilde K$ is connected,
essential and $\tau$-invariant, and it contains $L$. Since $K$
contains $K_0$, it remains non-displaceable relative to $H$.

Finally,
\(
\operatorname{Fix}_{\widetilde H}(\widetilde K)
\subseteq\operatorname{Fix}_{\widetilde H}(L).
\)
Hence the required power-detection condition follows immediately
from the corresponding condition for $L$.
\end{proof}

\subsection{The lifted subgroup criterion}

\begin{theorem}[Non-orientable subgroup criterion]
\label{thm:nonorientable-subgroup}
Let $N$ be a connected non-orientable infinite-type surface,
$p:S\to N$ its orientation double cover, and $H<\operatorname{Map}(N)$.  Put
$\widetilde H=\iota(H)$.
Assume that for every finite set $F\subset H\setminus\{1\}$ there exists a
connected essential non-orientable finite-type subsurface $K\subset N$ such
that, with $\widetilde K=p^{-1}(K)$,
\begin{enumerate}
\item $K$ is non-displaceable relative to $H$;
\item for every $h\in F$,
\(
\langle\iota(h)\rangle
       \cap\operatorname{Fix}_{\widetilde H}(\widetilde K)=\{1\};
\)
\item the group
\(
Q_{\widetilde H}(\widetilde K)
       :=\rho_{\widetilde K}
       \bigl(\operatorname{Stab}_{\widetilde H}(\widetilde K)\bigr)
\)
acts non-elementarily on $\mathcal{C}(\widehat{\widetilde K})$ and has no
non-trivial finite normal subgroup.
\end{enumerate}
Then $H$ has property $P_{\text{naive}}$.
\end{theorem}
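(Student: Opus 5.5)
The plan is to reduce directly to the orientable subgroup criterion, Proposition~\ref{prop:subgroup-criterion}, applied to the subgroup $\widetilde H=\iota(H)\leq\operatorname{Map}(S)$. Then I transport the conclusion back to $H$ through the injective homomorphism $\iota$ of Lemma~\ref{lem:canonical-lift}.

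The descent step is formal. Property $P_{\text{naive}}$ is an isomorphism-invariant property of abstract groups, and $\iota$ is injective because $N$ has infinite type. Hence $H\cong\widetilde H$, and it suffices to prove that $\widetilde H$ has $P_{\text{naive}}$. Concretely, given a finite $F\subset H\setminus\{1\}$, a witness $\widetilde g\in\widetilde H$ for $\iota(F)$ satisfies $\widetilde g=\iota(g)$ for a unique $g\in H$. The isomorphisms $\langle\iota(h),\widetilde g\rangle\cong\langle\iota(h)\rangle*\langle\widetilde g\rangle$ then pull back along $\iota$, and $g$ has infinite order. This is exactly why we run the construction inside $\widetilde H$ rather than in $\operatorname{Map}(S)$.

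To apply Proposition~\ref{prop:subgroup-criterion} to $\widetilde H$, fix a finite $\widetilde F\subset\widetilde H\setminus\{1\}$ and write $\widetilde F=\iota(F)$ with $F\subset H\setminus\{1\}$. Let $K\subset N$ be the subsurface given by the hypothesis for $F$, and put $\widetilde K=p^{-1}(K)$. I check the three conditions of the criterion in turn.
\begin{enumerate}
\item $\widetilde K$ is connected and essential. Connectedness is Lemma~\ref{lem:lift-K-connected}. Essentiality should follow from essentiality of $K$, since boundary curves of $\widetilde K$ are preimages of boundary curves of $K$ and cannot bound disks, punctured disks or annuli upstairs unless they already did downstairs.
\item $\widetilde K$ is non-displaceable relative to $\widetilde H$, by Lemma~\ref{lem:nondisp-lifts}.
\item Condition (2) of the criterion is literally hypothesis (2), and condition (3) is literally hypothesis (3), with $H$ replaced by $\widetilde H$.
\end{enumerate}

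The main obstacle is the non-displaceability transfer in the second item. Lemma~\ref{lem:nondisp-lifts} requires $K$ to be non-exceptional ($g+b>5$), so that it is filled by two two-sided curves. I would first enlarge $K$ to a connected essential non-orientable finite-type $K'\supset K$ with $g+b>5$; this is possible since $N$ has infinite type. I then have to check that the hypotheses persist for $K'$:
\begin{itemize}
\item Non-displaceability persists because it is inherited by supersurfaces.
\item Condition (2) persists because $\operatorname{Fix}_{\widetilde H}(\widetilde K')\subseteq\operatorname{Fix}_{\widetilde H}(\widetilde K)$, as in the proof of Lemma~\ref{lem:tau-powerdetect}.
\item Condition (3) is the delicate one, since $Q_{\widetilde H}$ changes with the subsurface.
\end{itemize}
For condition (3) I would try to argue that hypothesis (3) already forces $K$ to be sufficiently complex. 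If $\widehat{\widetilde K}$ were a low-complexity surface, its curve graph would be bounded or elementary, or its mapping class group would have a non-trivial finite normal subgroup such as the hyperelliptic involution. That would contradict a non-elementary action with no finite normal subgroup. If this does not suffice, I would instead read the hypothesis as allowing $K$ to be chosen with $g+b>5$; in the intended application (Theorem~\ref{thm:main}) $K$ is produced by enlargement anyway, so the non-exceptionality can be imposed there.

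With these conditions verified, Proposition~\ref{prop:subgroup-criterion} gives that $\widetilde H$ has $P_{\text{naive}}$, and the descent step above concludes the proof.
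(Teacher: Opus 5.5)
Your reduction is the paper's proof. The paper invokes Lemma~\ref{lem:nondisp-lifts} to get non-displaceability of $\widetilde K$ relative to $\widetilde H$. It notes that (2) and (3) are verbatim the remaining hypotheses of Proposition~\ref{prop:subgroup-criterion} for $\widetilde H$ and $\iota(F)$. It then pulls the witness back through the injective map $\iota$; your isomorphism-invariance phrasing is the same descent. The paper never checks the hypothesis $g+b>5$ of Lemma~\ref{lem:nondisp-lifts}, so you have spotted a real omission. However, neither of your repairs closes it.

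\emph{Argument (a) cannot give $g+b>5$.} For compact $K=N_{g,b}$, the surface $\widehat{\widetilde K}$ has genus $g-1$ and $2b$ punctures. Nothing in (3) rules out, say, $K=N_{4,1}$ or $K=N_{3,2}$, where $g+b=5$. Their capped double covers (genus $3$ with two punctures, genus $2$ with four punctures) are nowhere near low complexity.

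\emph{The enlargement route fails because (3) is not monotone in $K$.} Suppose every element of $H$ is supported in $K$, and take an enlargement $K'$ in which the curves of $p^{-1}(\partial K)$ stay essential. Then $Q_{\widetilde H}(\widetilde K')$ fixes those curves and so acts elementarily on $\mathcal C(\widehat{\widetilde K'})$, although (3) can hold for $K$ itself.

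So, as written, your proof covers exactly what the paper's does, namely $K$ in the range of Lemma~\ref{lem:nondisp-lifts}. This is harmless for Theorem~\ref{thm:main}, since there the subsurfaces contain $N_0$ and $g+b=2-\chi$ does not decrease under essential enlargement.

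To prove the statement as given, avoid the filling pair altogether. Lift a complete hyperbolic metric on $N$ to a $\tau$-invariant one on $S$. For $f$ representing $h\in H$, the geodesic representatives of $\widetilde K$ and $\widetilde f(\widetilde K)$ are then the preimages of those of $K$ and $f(K)$. Isotopically disjoint essential subsurfaces have geodesic representatives with disjoint interiors. Hence a displacement of $\widetilde K$ upstairs yields a displacement of $K$ downstairs.
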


\begin{proof}
By Lemma~\ref{lem:nondisp-lifts}, $\widetilde K$ is non-displaceable relative
to $\widetilde H$.  Conditions (2) and (3) are precisely the remaining
hypotheses of Proposition~\ref{prop:subgroup-criterion}, applied to the orientable
surface $\widetilde S$, the subgroup $\widetilde H$ and the finite set
$\iota(F)$.  Hence there is an infinite-order element
$\widetilde g\in\widetilde H$ such that
\[
       \langle\widetilde g,\iota(h)\rangle
       \cong
       \langle\widetilde g\rangle*\langle\iota(h)\rangle
       ~(h\in F).
\]

Since $\widetilde g\in\widetilde H=\iota(H)$, there exists $g\in H$ with
$\widetilde g=\iota(g)$.  The element $g$ has infinite order because $\iota$
is injective.  If a non-trivial reduced word in $g$ and $h$ represented the
identity in $H$, applying $\iota$ would give the corresponding non-trivial
reduced relation between $\widetilde g$ and $\iota(h)$, a contradiction.
Thus
\[
       \langle g,h\rangle
       \cong\langle g\rangle*\langle h\rangle
       ~(h\in F).
\]
Since $F$ was arbitrary, $H$ has property $P_{\text{naive}}$.
\end{proof}

The descent argument used above can also be isolated as follows.

\begin{lemma}\label{lem:free-descend}
Let $f,g\in\operatorname{Map}(N)$.  If
\[
       \langle\iota(f),\iota(g)\rangle
       \cong
       \langle\iota(f)\rangle*\langle\iota(g)\rangle
\]
in $\operatorname{Map}(S)$, then
\[
       \langle f,g\rangle
       \cong\langle f\rangle*\langle g\rangle
\]
in $\operatorname{Map}(N)$.
\end{lemma}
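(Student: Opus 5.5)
The plan is to use only that $\iota$ is an injective homomorphism (Lemma~\ref{lem:canonical-lift}; $N$ is of infinite type throughout this section). The map $\iota$ will then transport the free-product structure from $\operatorname{Map}(S)$ back to $\operatorname{Map}(N)$. Concretely, consider the natural surjection
\[
\pi:\langle f\rangle*\langle g\rangle\longrightarrow\langle f,g\rangle\leq\operatorname{Map}(N),
\]
together with the analogous surjection $\widetilde\pi:\langle\iota(f)\rangle*\langle\iota(g)\rangle\to\langle\iota(f),\iota(g)\rangle$. The hypothesis says precisely that $\widetilde\pi$ is an isomorphism. It then suffices to show that $\pi$ is injective.

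The first step is to note that $\iota$ restricts to isomorphisms $\langle f\rangle\to\langle\iota(f)\rangle$ and $\langle g\rangle\to\langle\iota(g)\rangle$, since it is injective. In particular, orders are preserved. By the universal property of free products, these two isomorphisms induce an isomorphism
\[
\iota_*:\langle f\rangle*\langle g\rangle\to\langle\iota(f)\rangle*\langle\iota(g)\rangle,
\]
and by construction $\widetilde\pi\circ\iota_*=\iota\circ\pi$. Now let $w\in\ker\pi$. Then $\widetilde\pi(\iota_*(w))=\iota(\pi(w))=1$, so $\iota_*(w)=1$ because $\widetilde\pi$ is injective, and hence $w=1$ because $\iota_*$ is injective. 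Therefore $\pi$ is an isomorphism.

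The only point needing care is the identification of syllables. A non-trivial syllable $f^a$ must map to a non-trivial syllable $\iota(f)^a$, so that reduced words go to reduced words. This is exactly where injectivity of $\iota$ on the cyclic subgroups is used. It covers the finite-order case as well: if $f$ had order $r$ while $\iota(f)$ had strictly smaller order, the argument would fail. Injectivity rules this out, so no genuine obstacle remains. The argument is the same descent step already used in the proof of Theorem~\ref{thm:nonorientable-subgroup}, now stated in isolation.
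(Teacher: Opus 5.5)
Your proof is correct and takes the same route as the paper: both use only the injectivity of $\iota$ from Lemma~\ref{lem:canonical-lift}. The paper says that a non-trivial reduced relation in $f,g$ would map to one in $\iota(f),\iota(g)$; you package the same idea through the induced isomorphism $\iota_*$ and the commuting square $\widetilde\pi\circ\iota_*=\iota\circ\pi$, which also makes explicit that non-trivial syllables stay non-trivial.
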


\begin{proof}
A non-trivial reduced relation in $f$ and $g$ would map under the injective
homomorphism $\iota$ to the same reduced relation in $\iota(f)$ and
$\iota(g)$, contradicting the assumed free-product decomposition.
\end{proof}

Next we give the proof of Theorem~\ref{thm:main}.

\begin{proof}[Proof of Theorem~\ref{thm:main}]

Let $N_0$ be a compact non-displaceable subsurface of $N$. Since $N$ is of infinite type, we can always enlarge it keeping it compact so that $N_0$ is a compact connected non-orientable subsurface of genus $g$ and $b$ boundary components, such that $g + b > 5$ and $b \geq 2g$. Note that under these circumstances $\operatorname{Map}(N_0)$ admits a pseudo-Anosov element, and the existence of pseudo-Anosov elements is going to persist in any enlargement of $N_0$.

Let $K \supset N_0$ be a compact connected essential non-orientable subsurface. Then, being $G := \iota(\operatorname{Map}(N))$, $\operatorname{Stab}_G(\widetilde K)$ admits a $K$-pseudo-Anosov, which implies that $Q_G(\widetilde K) \leq  \operatorname{PMap}(\widehat{\widetilde K})$ has a pseudo-Anosov map. Due to the conditions on $N_{0}$, $\operatorname{Map}(\widehat{\widetilde K})$ acts acylindrically on $\mathcal{C}(\widehat{\widetilde K})$. Then, $Q_G(\widetilde K)$ acts acylindrically on $\mathcal{C}(\widehat{\widetilde K})$. Moreover, \(Q_G(\widetilde K)\) is not virtually cyclic. Indeed, choose an essential two-sided curve \(c\subset\operatorname{int}(K)\) such that, writing \(p^{-1}(c)=\alpha\sqcup\tau(\alpha)\), the curves \(\alpha\) and \(\tau(\alpha)\) remain essential and non-isotopic in \(\widehat{\widetilde K}\). Extending \(T_c\) by the identity outside \(K\), its canonical lift induces
$$
\rho_{\widetilde K}(\iota(T_c))
   =T_\alpha T_{\tau(\alpha)}^{-1}
   \in Q_G(\widetilde K),
$$
up to replacing \(T_\alpha T_{\tau(\alpha)}^{-1}\) by its inverse. The element \(T_\alpha T_{\tau(\alpha)}^{-1}\) has infinite order and is elliptic on \(\mathcal C(\widehat{\widetilde K})\), since it fixes \([\alpha]\). If \(Q_G(\widetilde K)\) were virtually cyclic, then \(T_\alpha T_{\tau(\alpha)}^{-1}\) and the pseudo-Anosov element above would have non-zero powers that coincide, which is impossible because non-zero powers of the former are elliptic whereas those of the latter are loxodromic. Therefore, the classification of acylindrical actions by Osin \cite{O16} implies that $Q_G(\widehat{\widetilde K})$ acts also non-elementarily on $\mathcal{C}(\widehat{\widetilde K})$.

Finally, for $K$ chosen so that $g+b>5$, the capped
orientation double cover identifies $Q_G(\widetilde K)$ with a subgroup
of $\operatorname{Map}(\widehat K)$ containing all Dehn twists about
generic two-sided curves. Every finite normal subgroup therefore
centralizes a non-zero power of each such twist, and hence the entire
twist subgroup by \cite[Corollary~4.5 and Proposition~4.6]{Stu06}.
Since $\widehat K$ has no boundary, the centralizer of this twist
subgroup is trivial by \cite[Theorem~6.2]{Stu06}. Thus
$Q_G(\widetilde K)$ has no non-trivial finite normal subgroup.
Therefore, $N$ satisfies the symmetric finite-type quotient condition.

Let
\(F=\{f_1,\ldots,f_n\}
       \subset\operatorname{Map}(N)\setminus\{1\}.
\)
Let $K_0$ be the non-displaceable core supplied by the symmetric finite-type quotient condition.  Apply Lemma~\ref{lem:tau-powerdetect} to obtain a
connected essential non-orientable finite-type enlargement $K_1\supset K_0$
for which
\[
\langle\iota(f_i)\rangle
       \cap\operatorname{Fix}_{G}(\widetilde K_1)=\{1\}
       ~(1\leq i\leq n),
\]
with $\widetilde{K}_1 := p^{-1}(K_1)$.
By the symmetric finite-type quotient condition, enlarge $K_1$ further to a connected essential
non-orientable finite-type subsurface $K\supset K_1$ such that
\(Q_{G}(\widetilde K)\)
acts non-elementarily on $\mathcal{C}(\widehat{\widetilde K})$ and has no
non-trivial finite normal subgroup, as in the beginning of the proof, again with $\widetilde K := p^{-1}(K)$.  Since
\(       \operatorname{Fix}_{G}(\widetilde K)
       \subseteq \operatorname{Fix}_{G}(\widetilde K_1),
\)
the power-detection condition persists:
\[
       \langle\iota(f_i)\rangle
       \cap\operatorname{Fix}_{G}(\widetilde K)=\{1\}
       ~(1\leq i\leq n).
\]
Moreover, $K$ contains the non-displaceable core $K_0$, hence is itself
non-displaceable.  Thus all hypotheses of
Theorem~\ref{thm:nonorientable-subgroup} hold for $H=\operatorname{Map}(N)$.  Therefore
$\operatorname{Map}(N)$ has property $P_\text{naive}$.
\end{proof}

\begin{remark}[Checking the finite-type hypothesis]\label{rem:verifySFQ}
The symmetric finite-type quotient condition separates the infinite-type
argument from a finite-type verification.  A convenient sufficient route is
the following.  For every sufficiently large non-orientable finite-type
enlargement $K$ of the non-displaceable core, verify that the symmetric
restriction group
$Q_{G}(\widetilde K)$ contains two independent pseudo-Anosov elements.
Its action on $\mathcal{C}(\widehat{\widetilde K})$ is then non-elementary, while
acylindricity is inherited from the Bowditch action of
$\operatorname{Map}(\widehat{\widetilde K})$.  It remains to rule out a finite normal
subgroup.  In the usual high-complexity non-orientable range this can be
done using powers of Dehn twists about two-sided curves together with
rigidity of the two-sided curve complex; compare Atalan-Korkmaz
\cite{AK14} and Irmak-Paris \cite{IP20}.  This verification is finite type
and is logically independent of the power-detection and descent arguments
proved above.
\end{remark}

%\begin{remark}[Why one must work inside the lifted subgroup]
%\label{rem:inside-lifted}
%The statement that $\operatorname{Map}(S)$ has property $P_\text{naive}$ is not by itself sufficient to prove property $P_\text{naive}$ for $\operatorname{Map}(N)$.  An arbitrary witness in $\operatorname{Map}(S)$ need not descend to $N$.  In the proof above the orientable subgroup criterion is applied directly to $G=\iota(\operatorname{Map}(N))$. The common pseudo-Anosov witness therefore belongs to $G$ from the outset and is automatically the canonical lift of a mapping class of $N$. No product construction and no separate argument asserting preservation of WWPD under such a product is needed. \end{remark}

\bibliographystyle{alpha}
\bibliography{sample}

\end{document}